\definecolor{darkblue}{rgb}{0.3,0.3,0.7}
\DeclareMathAlphabet{\eufrak}{U}{}{}{}  
\SetMathAlphabet\eufrak{normal}{U}{euf}{m}{n}
\SetMathAlphabet\eufrak{bold}{U}{euf}{b}{n}
\newtheorem{prop}{Proposition}[section]
\newtheorem{theorem}[prop]{Theorem}
\newtheorem{lemma}[prop]{Lemma}
\newtheorem{corollary}[prop]{Corollary}
\theoremstyle{definition}
\newtheorem{remark}[prop]{Remark}
\newtheorem{definition}[prop]{Definition}
\newtheorem{notation}[prop]{Notation}
\newtheorem{convention}[prop]{Convention}
\numberwithin{equation}{section}
\def\E{\mathbb{E}}
\def\P{\mathbb{P}}
\def\real{\mathbb{R}}
\def\t{{t}}
\def\T{{T}}
\def\F{\mathcal{F}}
\def\1{\textbf{1}}
\def\ind#1{\textbf{1}_{\left\{#1\right\}}}
\def\X{\mathbb{X}}
\def\bZ{\boldsymbol{\zeta}}
\newcommand{\be}{\begin{equation}}
\newcommand{\ee}{\end{equation}}
\newcommand{\bde}{\begin{displaymath}}
\newcommand{\ede}{\end{displaymath}}
\newcommand{\beq}{\begin{eqnarray*}}
\newcommand{\eeq}{\end{eqnarray*}}
\newcommand{\beqa}{\begin{eqnarray}}
\newcommand{\eeqa}{\end{eqnarray}}
\newcommand{\bel }{\left\{\begin{array}{ll}}
\newcommand{\eel}{\cr \end{array} \right.}
\newcommand{\bex}{\begin{ex} \rm }
\newcommand{\eex}{\end{ex}}
\def\R{\mathbb R}
\def\N{\mathbb N}
\def\E{\mathbb E}
\def\F{{\cal F}}
\def\P{\mathbb P}
\def\Z{\mathcal Z}
\def\bP{\boldsymbol{\varphi}}
\def\bPE{\boldsymbol{\overline{\varphi}}}
\def\bN{\bold{N}}
\def\bM{\boldsymbol{\mu}}
\def\bO{\boldsymbol{\omega}}
\def\bX{\bold{X}}
\def\bI{\bold{Id}}
\def\br{\boldsymbol{\mathfrak{m}}}
\def\bx{\bold{x}}
\def\by{\bold{y}}
\def\bD{\bold{D}}
\def\bpI{\boldsymbol{\mathcal I}}
\def\T{\mathcal T}
\def\bnu{\boldsymbol{\nu}}
\DeclareMathOperator{\TbPs}{\overline{\boldsymbol{\widetilde{\Psi}}}}
\DeclareMathOperator{\bPs}{\overline{\boldsymbol{\Psi}}}
\def\myZ#1{\boldsymbol{Z}_{#1}^{\, i}}
\def\bZeta{\boldsymbol{\zeta}}
\def\bZetaE{\boldsymbol{\overline{\zeta}}}
\def\bLamb{\boldsymbol{\lambda}}
\def\myVarLamb#1{^{(#1)}\hspace{-0.05cm}\boldsymbol{\lambda}}
\def\myVarZ#1{^{(#1)}\hspace{-0.05cm}\mathscr{Z}}
\def\cal#1{\mathcal{#1}}
\def\b{\textcolor{darkblue}}
\def\r{\textcolor{BrickRed}}
\def\gr{\textcolor{Green}}
\definecolor{ying}{rgb}{0.8, 0.0, 0.04}
\DeclareSymbolFontAlphabet{\mathrsfs}{rsfs}
\author{Caroline Hillairet\footnote{ENSAE  IP Paris, CREST UMR 9194,
5  avenue Henry Le Chatelier
91120 Palaiseau, France.  Email: \texttt{caroline.hillairet@ensae.fr}} \and Thomas Peyrat\footnote{{EXIOM Partners and CREST UMR 9194,
5  avenue Henry Le Chatelier
91120 Palaiseau, France.}  \; Email: \texttt{thomas.peyrat@ensae.fr}} \and Anthony R\'eveillac\footnote{INSA de Toulouse, IMT UMR CNRS 5219, Universit\'e de Toulouse, 135 avenue de Rangueil 31077 Toulouse Cedex 4 France. \; Email: \texttt{anthony.reveillac@insa-toulouse.fr}} }
\title{Multivariate Self-Exciting Processes with Dependencies 
\\
}
\begin{document}

\maketitle

\allowdisplaybreaks

\begin{abstract}
\noindent
This paper introduces the class of multidimensional self-exciting processes with dependencies (MSPD), which is a unifying writing for a large class of processes: counting, loss, intensity, and also shifted processes. The  framework takes into account  dynamic dependencies between
the frequency and the severity components of the risk, and therefore induces theoretical challenges in the computations of risk valuations.
We present  a general method for calculating  different quantities related to these MSPDs, which combines the Poisson imbedding, the pseudo-chaotic expansion and Malliavin calculus. The methodology is illustrated for the  computation of  explicit general correlation formula.
\end{abstract}

\noindent
\textbf{Keywords:} Multidimensional Hawkes Processes; Dynamic cross dependencies; Poisson imbedding; Malliavin calculus, Pseudo-chaotic expansion.\\
\noindent
\textbf{Mathematics Subject Classification (2020):} 60G55; 60G57; 60H07.

\section{Introduction}
\label{section:intro}


Risk analysis for  credit or actuarial portfolios  is usually based on the study of properties
of the  so-called cumulative loss process $(L_t)$ over a period of time  $[0, T]$ where $T > 0$
denotes the maturity of a contract or the time-horizon:
$$
L_T = \sum_{i=1}^{N_T}Y_i, \quad \quad T \geq 0.
$$
 $(N_t)_{t\geq 0}$  is a counting process that models the occurence of the claims (as the defaults for a credit portfolio, or the losses for an insurance portfolio), while the random variables $(Y_i)_i $ model the claims amounts. In the classic Cramer Lundberg model, $(N_t)_{t\geq 0}$ is a Poisson process  and $(Y_i)$ is a family of positive $iid$ random variables independent of $(N_t)_{t\geq 0}$.  
However, these assumptions prevent this model from being used on certain risks where contagion or dependency phenomena have been observed, as e.g. in credit risk (see \cite{errais2010affine,schmidt2017shot,embrechts2011multivariate} among others) or cyber risk (see \cite{baldwin2017contagion,bessy2021multivariate}). In contrast, Hawkes processes, introduced in  the 1970s by  A. Hawkes,  turned out to be relevant for capturing excitation phenomena, and are now used in many fields (earthquake modeling, neuroscience, Limit order books in finance, credit risk, cyber risk,   etc...). A Hawkes process $(H_t)_{t\geq0}$ is characterized by a stochastic intensity process $(\lambda_t)_{t\geq0}$ which is  a deterministic functional of the past trajectory of the counting process itself, explicitly given as follows 
$$
\lambda_t = \mu + \int_{0}^{t}\Phi(t-s)dH_s, \quad \quad t \geq 0
$$
where $\mu >0$ is the baseline intensity and $\Phi: \R_+\rightarrow\R_+$ is the self-excitation kernel. In this standard linear Hawkes model, all claims have the same contagiousness pattern, which may not be usually the case. In particular, it would be interesting to modulate this contagion effect by the size of the claims, which seems to be quite natural for example in credit risk (a default with a large loss given default will have a higher probability to generate other cascading defaults) and also in cyber or health insurance.
This motivates us to propose in
this paper an extension of a Hawkes process in which the intensity (more precisely the excitation kernel $\Phi$)  is affected  by the claims sizes, and thus introducing  dependencies between the severity components (the $(Y_i)$) and the frequency components (the counting process). 
This process can be seen as a system of weakly SDEs with respect to a Poisson measure $N(dt,d\theta,dy)$ on $(\R^+)^3$.  This representation is known as Poisson imbedding, with an   extra dimension (here $dy$) to take into account the impact of the claims sizes $Y_i$ :
$$
\left\lbrace
\begin{array}{l}
\lambda_T:=\mu + \int_{(0,T)\times \R_+^2} \varphi(T-t,y) \ind{\theta \leq \lambda_t} N(dt,d\theta,dy),\\ \\
H_T = \int_{(0,T]\times \R^2_+} \ind{\theta \leq \lambda_t} N(dt,d\theta,dy) ,\\ \\
	L_T = \int_{(0,T]\times \R_+^2} y \,  \ind{\theta \leq \lambda_t} N(dt,d\theta,dy) \hspace{1cm} T\geq 0 .\\	
\end{array}
\right.
$$
Such model has been first introduced in Khabou \cite{khabou2022approximation} {in the case $\bP(T-t,y) = \Phi(T-t)b(y)$}. We study here a multivariate version  to better capture the different components of  the risk and their cross dependencies.
Thus, the purpose of this article is to present  theoretical results on a general  $d$-dimensional stochastic process called {\it Multivariate Self-Exciting Processes with Dependencies  (MSPD)} $\boldsymbol{Z}$, which  is a unifying writing for a large class of processes:  counting, loss, intensity, and also shifted processes that will be defined hereafter. It is useful for several applications in finance and insurance, taking into account  dynamic dependencies between the frequency and the severity components of the risk. 
A MSPD $\boldsymbol{Z}$  is defined through the imbedding procedure: for each component $\myZ{T}$, $i=1,\cdots, d$
\begin{equation}\label{equation:Z_1}
	\left\lbrace
	\begin{array}{l}
		\myZ{T}= \int_{\{1,\ldots, d\}\times(0,T]\times \R_+^2} \bZeta^{i,k}(T-t,y) \ind{\theta \leq \lambda^k_t} \bN(dk,dt,d\theta,dy) \\\hspace{11cm} T\geq 0 \\
		\bLamb^i_T:= \bM^i(t) + \int_{\{1,\ldots, d\}\times(0,T]\times \R_+^2} \bP^{i,k}(T-t,y) \ind{\theta \leq \lambda^k_t} \bN(dk,dt,d\theta,dy)\\
	\end{array}
	\right.
\end{equation}
where $\bZeta$ and $\bP$ are two $d\times d$  matrices : for $1 \leq i,k  \leq d$,  $\bP^{i,k} : [0,T]\times\R_+ \to \R_+$ (impact of the $k^{th}$-dimension on the $i^{th}$-dimension)   and $\bM^i  : [0,T] \to \R_+$ are deterministic  baseline intensities.  A formal definition of this process will be given in Definition \ref{def:MSPD}, and if $\bZeta^{i,k}(t,y)=y \ind{i=k}$ (resp. $\bZeta^{i,k}(t,y)=\ind{i=k}$), the process $\boldsymbol{Z}$ coincides with the Loss process $L$ (resp. the counting process $H$).  \\

Although a Multivariate Self-Exciting Processes with Dependencies  (MSPD) models more accurately the risk, the loss of the independency assumptions, compared to the tractable Cramer Lundberg model,  induces theoretical challenges in the computations of risk valuations. Therefore our objective is  to present a general method for calculating different quantities useful in risk assessment.  The methodology relies on the Poisson imbedding and Malliavin calculus, in the spirit of \cite{Hillairet_Reveillac_Rosenbaum}, which enables us to transform the computation of some expectations  as follows (known as Mecke formula): 
\begin{equation}
	\E\left[F \int_{\bX} h \, d\bN \right] = \int_{\bX} h(\bx) \E\left[F\circ \varepsilon_{\bx}^{+}\right] \br(d\bx).
	\end{equation}
Here $\bx= (k, t,\theta,y)  \in \bX$ and $ \br$ is the intensity  measure of $\bN$,  and   the notation $F\circ \varepsilon_{\bx}^{+}$
 denotes the functional on the Poisson space where a deterministic
jump is added to the paths of $\bN$ at time  $t$. This expression turns out to be particularly
interesting from an actuarial point of view since adding a jump at some time $t$ corresponds
to computing a  stress scenario by adding artificially a claim at time $t$. Such processes $F\circ \varepsilon_{\bx}^{+}$ will be called {\it shifted processes} in the sequel. They are related to the 
vertical derivatives in  the functional It\^o calculus of Cont and Fourni\'e  in \cite{cont2013functional}.  In particular, our  methodology uses the pseudo-chaotic expansion for a counting process, which has been recently developed in \cite{Caroline_Anthony_chaotic}.


The paper is organized as follows.  Section 2  provides a description of the Poisson space  and introduces the MSPD processes.  Section 3  recalls some elements of Malliavin calculus and the Mecke formula.  Section 4 develops the pseudo-chaotic expansion for MSPD process. As an illustration, Section 5 demonstrates how the methodology can be applied to compute the expectation and correlation of MSPD.  Section 6 concludes the paper by outlining  potential directions for future work and development.

\section{Multivariate Self-Exciting Processes with Dependencies}
This section  introduces  the theoretical framework of the Poisson measure and  defines the main mathematical objects of the paper. 
\subsection{Configuration space and the Poisson measure}
We  first introduce the configuration space and the Poisson measure, taking the main elements from \cite{Last2016} and  \cite[Chapter 6]{Privault_2009}.\\
We set $\mathbb{N}^*:=\mathbb{N} \setminus \{0\}$ the set of positive integers. 
We fix $\X:=\R_+^3$ equipped with the Borelian $\sigma$-field and we make use of the notation $x:=(t,\theta,y) \in \X$.
In this paper we fix $\bX:=\{1,\dots,d\}\times\R_+^3$ equipped with the Borelian $\sigma$-field $\mathcal X$. Thus, elements of $\bX$ will be written as $\bx:=(k,x); k \in [\![1,d]\!]; x=(t,\theta,y)\in \X$.
 $\br$ the $\sigma$-finite measure on $(\bX,\mathcal X)$ is defined as follows: for $f: \bX \rightarrow \R_+$
a measurable and bounded function
\begin{align*}
	\int_{\bX} f(\bx)\br(d\bx)&:=\sum_{k=1}^d \int_{\X}f(k,t,\theta,y)\bnu^{k}(dy)d\theta dt
\end{align*}
where for all $1\leq k \leq d$, $\bnu^{k}$  are independent probability measures on $\R_+$ and $dx$ denotes the Lebesgue measure on $\X$.\\

\noindent
We define $\Omega$ the space of configurations on $\bX$ as 
\begin{equation}
	\label{eq:Omega}
	\Omega:=\left\{\bO=\sum_{j=1}^{n} \delta_{\bx_j}, \; \bx_j \in \bX, \;j=1,\ldots,n, \; n\in \mathbb{N}\cup\{+\infty\} \right\}.
\end{equation}

\noindent
Let $\P$ the Poisson measure on $ \Omega$ under which 
the canonical evaluation $\bN$ defines a Poisson random measure with intensity measure $\br$. To be more precise given any element $\bold{A}$ in $ \mathcal X$ with $\br(\bold{A})>0$, the random variable
$$(\bN(\bO))(\bold{A}):= \bO(\bold{A})$$
is a Poisson random variable with intensity $\rho(\bold{A})$. We denote by $\mathcal{F}^{\bN}:=\sigma\{\bN(\bold{A}); \bold{A} \in  \mathcal X\}$ the $\sigma$-field generated by $\bN$. We set $\mathbb{F^N}:=\left(\F_t^N\right)_{t\geq0}$ the natural history of $\bN$ given by $\F_t^N:=\sigma\{\bN(\boldsymbol{\T} \times \boldsymbol{A}), \boldsymbol{\T}\subset \mathcal{B}([0,t]),\boldsymbol{A} \in  \mathcal{B}(\{1,\dots,d\}\times\R_+^2)\}$ where $\mathcal{B}$ denotes the  Borelian $\sigma$-field on the corresponding set.
In the case of  processes defined up to a fixed horizon $T$, we use the restrictions $\X_T:=[0,T]\times \R_+^2$ and $\bX_T:= \{1,\dots,d\}\times\X_T$.

\subsection{Kernels and convolutions}
 This section introduces  some notations, definitions and hypothesis on the matrices  $\bZeta$ and $\bP$  involved in the Poisson imbedding procedure (\ref{equation:Z_1}) for the   construction of a  Multivariate Self-Exciting Process with Dependencies (MSPD).

\begin{convention}[Matrices and scalars]
	In order to distinguish between scalars and matrices, we adopt the convention that matrices will be written in bold. If $\boldsymbol{M}$ represents a matrix, $\boldsymbol{M}^{i,k}$ corresponds to the element of $\boldsymbol{M}$ positioned in the \textit{i-th} row and \textit{k-th} column and $\boldsymbol{M}^{i,.}$ corresponds to \textit{i-th} row of the matrix $\boldsymbol{M}$. 
	If $\boldsymbol{V}$ represents a vector, then $\boldsymbol{V}^i$ will be the \textit{i-th} component of the vector. Moreover we design by $\boldsymbol{\bI_d}$ the \textit{d-}dimensional identity matrix and $\mathcal M_{d,d}^+$ denotes the set of $d\times d$ matrices with entries in $\R_+$. 
\end{convention}

\begin{definition}[\textit{d-}kernel]
	Let $d\in \N_+$ and $(\bnu^k)_{1\leq k \leq d}$ a family of probability densities on $\mathbb R^+$, then $\bZeta$ is a \textit{d-}kernel if $\bZeta$ is a $\mathcal M_{d,d}^+$-valued map such that each component $\bZeta^{i,k}(t,y): \R_+^2\to\R_+$ satisfies $\int_{\R^+} \bZeta^{i,k}(t,y)\bnu^k(dy)<+\infty$, for $(t,i,k)\in \R_+\times[\![1,d]\!]^2$. 
\end{definition}

\begin{definition}[Separable \textit{d-}kernel]\label{def:separable}
	A \textit{d-}kernel $\bZeta$ is said to be separable if $\forall(i,k)\in [\![1,d]\!]^2$ there exists two functions $\phi^{i,k}, b^{i,k}: \R_+ \to \R_+$  such that $\bZeta^{i,k}(t,y):=\phi^{i,k}(t)b^{i,k}(y)$. In this case, we denote
	$\bZeta (t,y)=  \boldsymbol{\Phi}(t) \star\boldsymbol{B}(y)$ where $\star$ is the Hadamard (that is elementwise) product. 
\end{definition}

\begin{notation}[$\boldsymbol{\nu}$-mean of a \textit{d-}kernel]
	Let  $\bZeta$ a \textit{d-}kernel associated to the measure  $\boldsymbol{\nu}=(\bnu^k)_{1\leq k \leq d}$. We denote by $\boldsymbol{\overline{\zeta}}: \R_+ \to\mathcal M_{d,d}^+$ the matrix in which each coefficient $\boldsymbol{\overline \zeta}^{i,k}$ is the mean of ${\bZeta}^{i,k}$ with respect to the measure $\bnu^k$. That is, 
\begin{equation}\label{equation:PhibE}
	\boldsymbol{\overline{\zeta}}(t):=\left(\int_0^{+\infty}\bZeta^{i,k}(t,y)\bnu^k(dy) \right)_{1\leq i,k\leq d}.
\end{equation}
\end{notation}

\begin{definition}[Non-explosive $d$-kernel]
	\label{assumption:Phi}
		A   \textit{d-}kernel  $\bZeta$ is  said to be non-explosive if  its spectral radius 	$
		\mathcal{R}(\boldsymbol{K}) < 1$ and 
		$$
		\boldsymbol{K}:=\int_0^{+\infty}\boldsymbol{\overline{\zeta}}(t)dt < + \infty .
		$$	
\end{definition}
\noindent As it is usual, the intensity of a counting process should be a predictable process. This requires to add a property on the kernel $\bP$ that defines the intensity $\bLamb$ in \eqref{equation:Z_1}.

\begin{definition}[Self-excitation \textit{d-}kernel]\label{def:self-excitation}
	Let $\bP$ a non-explosive \textit{d-}kernel.  $\bP$ is said to be a self-excitation kernel  if  in addition, for all $(i,k)\in [\![1,d]\!]^2$, $\bP^{i,k}(0,y):=0$.
 By convention, self-excitation kernel will be noted $\bP$, and will be used to define the intensity of a stochastic process.
 \end{definition}
\noindent Note that the condition $\bP^{i,k}(0,y)=0$  implies that 
\begin{eqnarray*}
&\hspace{-2.5cm}\int_{\{1,\ldots, d\}\times(0,T]\times \R_+^2} \bP^{i,k}(T-t,y) \ind{\theta \leq \lambda^k_t} \bN(dk,dt,d\theta,dy) \\
&= \int_{\{1,\ldots, d\}\times(0,T) \times \R_+^2} \bP^{i,k}(T-t,y) \ind{\theta \leq \lambda^k_t} \bN(dk,dt,d\theta,dy),
\end{eqnarray*} 
which ensures the predictability of the intensity. {In addition, we adopt the convention that for any $\tau<0$ we have $\bP(\tau,.):=0$.}

\begin{definition}[See \textit{e.g.} \cite{Bacryetal2013}]
	\label{prop:Phi_convole}
	Let $\bP$ a self-excitation \textit{d-}kernel. Let define the sequence of iterated convolutions of $\bPE$ such that
\begin{equation}\label{eq:Psi}
	\left\lbrace
	\begin{array}{l}
\bPE_0  \text{ denotes the Dirac distribution in }0, \\
 \bPE_1 :=\bPE, \quad \bPE_n:=\int_0^t \bPE(t-s) \bPE_{n-1}(s) ds, \quad t \in \real_+, \; n\in \mathbb{N}^*.\\
 \end{array}
	\right. 
\end{equation}
Since $\|\bPs\|_1 = (\bI-\boldsymbol{K})^{-1} - \bI = (\bI-\boldsymbol{K})^{-1} \boldsymbol{K}$, the mapping $\bPs$,
	\begin{equation}
		\label{eq:Psi}
		\bPs:=\sum_{n=1}^{+\infty} \bPE_n,
	\end{equation}
is well-defined as a limit in $L_1(\real_+;dt)$, and  by definition,
\begin{equation}\label{eq:magic_convol}
	\int_0^t \bPs(t-s) \bPE(s) ds =\bPs(t) - \bPE(t).
\end{equation}

\end{definition}

\noindent The intensity process $\bLamb$ is defined in (1.1) by an implicit equation. For sake of completeness, the following proposition details an iterative procedure for the  construction of this process.

\begin{prop}[Intensity process]
	Consider a vector $\bM:=(\boldsymbol{\mu}^{i})_{1\leq i \leq d}$ where each component $\boldsymbol{\mu}^{i}$ is a function from $\R^+$  to  $\R^+$ and let $\bP$ a self-excitation $d$-kernel. Then the system of SDEs 
	\begin{equation}\label{eq:intensitySDE}
		\bLamb_T^i:= \bM^i(T) + \int_{\bX_T}\bP^{i,k}(T-t,y) \ind{\theta \leq \bLamb_t^k} \bN(dk,dt,d\theta,dy); \quad T\geq 0, \quad i=1,\cdots, d
	\end{equation}
	 admits a unique $\mathbb{F^N}-$predictable solution $\bLamb$.	
\end{prop}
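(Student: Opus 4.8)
The plan is to establish existence and uniqueness simultaneously by an explicit recursive construction of $\bLamb$ along the successive atoms of $\bN$ that pass the thinning test $\theta\le\bLamb^k_t$, in the spirit of the classical thinning representation of multivariate Hawkes processes; this reduces the statement to one quantitative point, non-explosion, which is exactly what the hypothesis that $\bP$ be a non-explosive self-excitation $d$-kernel is meant to supply.

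Concretely, since before any atom is accepted the intensity must equal the baseline, I would let $\tau_1$ be the smallest time $t>0$ carrying an atom $(k,t,\theta,y)$ of $\bN$ with $\theta\le\bM^k(t)$ (and $\tau_1=+\infty$ if there is none), set $\bLamb^i_t:=\bM^i(t)$ on $[0,\tau_1]$, and then, given the first $n$ accepted atoms $(k_j,\tau_j,\theta_j,y_j)$ with $\tau_1<\dots<\tau_n$, put for $t>\tau_n$
\[
\bLamb^i_t:=\bM^i(t)+\sum_{j=1}^n\bP^{i,k_j}(t-\tau_j,y_j),
\]
let $\tau_{n+1}$ be the smallest $t>\tau_n$ carrying an atom $(k,t,\theta,y)$ with $\theta\le\bLamb^k_t$, and extend $\bLamb$ to $[0,\tau_{n+1}]$ by the same formula. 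Assuming $\bM$ locally integrable — and using $\boldsymbol K<\infty$ so that, once the past is frozen, each $\bLamb^k$ is integrable on compacts — the set of candidate atoms on any $[0,T]$ is a.s.\ finite, hence each $\tau_n$ is a.s.\ well-defined. The convention $\bP^{i,k}(0,\cdot)=0$ makes the contribution of every accepted atom vanish at its own occurrence time; this renders the pasting consistent, makes $t\mapsto\bLamb^i_t$ left-continuous at the $\tau_j$ and, together with the fact that on each stochastic interval $(\tau_n,\tau_{n+1}]$ the value $\bLamb^i_t$ is a deterministic measurable functional of the atoms of $\bN$ occurring strictly before $t$, yields $\mathbb{F^N}$-predictability; it also makes \eqref{eq:intensitySDE} hold on $[0,\tau_\infty)$, $\tau_\infty:=\sup_n\tau_n$, essentially by inspection, the stochastic integral there collapsing to $\sum_{\tau_j\le T}\bP^{i,k_j}(T-\tau_j,y_j)$.

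The heart of the argument is then to show $\tau_\infty=+\infty$ a.s., i.e.\ that $N^i_T:=\#\{j:\ k_j=i,\ \tau_j\le T\}$ is a.s.\ finite for every $T$. I would apply the compensation (Mecke) formula for the Poisson measure $\bN$ to the predictable integrand $\ind{\theta\le\bLamb^k_t}$ to get $\E[N^i_T]=\E\big[\int_0^T\bLamb^i_t\,dt\big]$, then take expectations in \eqref{eq:intensitySDE} and integrate out $\theta$ and $y$ — the $\bnu^k$-integral of $\bP^{i,k}(T-t,\cdot)$ being by definition $\bPE^{i,k}(T-t)$ — to obtain the vector renewal identity $\E[\bLamb_T]=\bM(T)+\int_0^T\bPE(T-t)\,\E[\bLamb_t]\,dt$ (the care needed, because non-explosion is not yet known, being taken by first stopping at $\tau_n$ and letting $n\to\infty$ by monotone convergence). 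Since $\bP$ is non-explosive, $\mathcal R(\boldsymbol K)<1$ and $\boldsymbol K<\infty$, so the resolvent $\bPs=\sum_{n\ge1}\bPE_n$ of Definition~\ref{prop:Phi_convole} is well-defined in $L^1$; iterating the renewal identity and summing, with the help of the resolvent identity \eqref{eq:magic_convol}, gives the explicit finite bound $\E[\bLamb_T]=\bM(T)+\int_0^T\bPs(T-t)\,\bM(t)\,dt<\infty$. Hence $\E[N^i_T]<\infty$, so $N^i_T<\infty$ a.s.\ and $\tau_\infty=+\infty$ a.s., and the construction defines $\bLamb$ on all of $\R_+$.

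Uniqueness follows from the same recursion: two $\mathbb{F^N}$-predictable solutions of \eqref{eq:intensitySDE} must coincide on $[0,\tau_1]$ (both equal $\bM$ before the first accepted atom), hence accept the same first atom and coincide on $[0,\tau_2]$, and so on; by induction they agree on $[0,\tau_n]$ for every $n$, and since $\tau_n\uparrow+\infty$ they are indistinguishable. I expect the non-explosion step to be the main obstacle: it is the only place where the spectral condition $\mathcal R(\boldsymbol K)<1$ truly enters, through the compensation formula and the resolvent identity \eqref{eq:magic_convol}, and it carries the technical nuisance of justifying the expectation identity for $\E[\bLamb_T]$ before finiteness is established; the recursion itself and the uniqueness argument are then essentially bookkeeping.
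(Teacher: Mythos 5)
Your proposal is correct and follows essentially the same route as the paper: an atom-by-atom recursive construction of $\bLamb$, non-explosion via finiteness of $\E\left[\int_0^T\sum_i\bLamb^i_s\,ds\right]$ under the non-explosive kernel hypothesis, and uniqueness from agreement on each $[0,\tau_n]$. If anything, your treatment of the non-explosion step is more explicit than the paper's (which simply asserts the finiteness of the expected integrated intensity), since you spell out the renewal identity and the resolvent bound $\E[\bLamb_T]=\bM(T)+\int_0^T\bPs(T-t)\bM(t)\,dt$.
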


\begin{proof}The proof follows \cite{coutin2024normal} which we extend to the multivariate case. The proof consists in iteratively constructing $\bLamb$, starting from the deterministic baseline intensity $\boldsymbol{\mu}$ in which we add successively the excitation component each time the corresponding aggregated counting process $\mathscr{Z}$ jumps (the aggregation of jumps from each component). More precisely, for $T\geq 0$,  let us initiate the procedure by 
considering a  constant  $d$-dimensional intensity and the corresponding cumulated one-dimensional counting process :
	 $$\myVarLamb{1}_T := \bM(T) \quad
	 \mbox{ and } \quad
	\myVarZ{1}_T:=\int_{\bX_T} \ind{\theta \leq \myVarLamb{1}^k_t} \bN(dk,dt,d\theta,dy).$$	
Then the sequence $\left(\myVarLamb{n},\hspace{0.01cm}\myVarZ{n}\right)_{n\in\N^*}$ is defined  by induction as follows: for all $i=1,\cdots, d$
	 $$
	 \myVarLamb{n+1}^i_T := \bM^i(T) + \int_{\bX_T} \ind{t \leq  \tau_{n}^{\mathscr{Z}}}\bP^{i,k}(T-t,y) \ind{\theta \leq  \myVarLamb{n}_t^k} \bN(dk,dt,d\theta,dy),
	 $$
	 $$\mbox {with }
	 \tau_{n}^{\mathscr{Z}}:= \inf \left\{ \tau>0 | \myVarZ{n}_\tau = n\right\}, 
	 $$
	 $$ \mbox { and }
	 \myVarZ{n+1}_T:=\int_{\bX_T} \ind{\theta \leq \myVarLamb{n}^k_t} \bN(dk,dt,d\theta,dy).
	 $$
	 Remark that   $\myVarZ{n}=  \hspace{0.01cm}\myVarZ{n+1}$ and  $\myVarLamb{n}= \hspace{0.01cm}\myVarLamb{n+1}$ on $[0, \tau_{n}^{\mathscr{Z}}]$,
	 and  $\myVarLamb{n} \leq \hspace{0.01cm} \myVarLamb{n+1}$  $\P. a.s$, thus 
	$\bLamb:= \lim \hspace{0.01cm}\myVarLamb{n}$ is well defined and  let  
	$\mathscr{Z}_T:=\int_{\bX_T} \ind{\theta \leq \bLamb^k_t} \bN(dk,dt,d\theta,dy)= \sum_{n \geq 1} \ind{\tau_{n}^{\mathscr{Z}}\leq T}.$	
Let us prove that  the increasing sequence  $(\tau_{n}^{\mathscr{Z}})_n$ converges to $+\infty$. Indeed, for any $T>0$,  by monotone convergence 
	 $$ \P( \lim_{n \to +\infty} \tau_{n}^{\mathscr{Z}}<T) = \lim_{n \to +\infty}\P(\tau_{n}^{\mathscr{Z}}<T)=  \lim_{n \to +\infty}\P(\mathscr{Z}_T\geq n).$$
	Moreover by  Markov inequality 
	$$\P(\mathscr{Z}_T\geq n) \leq \frac{\E[\mathscr{Z}_T]}{n} = \frac{\E[ \int_0^T \sum_i \boldsymbol{\lambda}^i_s ds]}{n}
	$$
As $\bP$ is a non explosive d-kernel, $\E[ \int_0^T \sum_i \boldsymbol{\lambda}^i_s ds] < \infty$ and $\lim_{n \to +\infty}\P(\mathscr{Z}_T\geq n)=0$. Therefore   $ \P( \lim_{n \to +\infty} \tau_{n}^{\mathscr{Z}}<T) = 0$ for any $T$ and $\lim_{n \to +\infty} \tau_{n}^{\mathscr{Z}}=+\infty$ $\P a.s.$. Moreover, since the intensity is constructed iteratively and  $\myVarLamb{n}= \hspace{0.01cm}\myVarLamb{n+1}$ on $[0, \tau_{n}^{\mathscr{Z}}]$, this guarantees the unicity of $\bLamb$ when $\tau_{n}^{\mathscr{Z}}$ tends to infinity.
\end{proof}

\subsection{Definition of a Multidimensional Self-exciting Process with Dependencies (MSPD)}
We define below a generic multidimensional process with self-exciting cross dependencies, a concept that encompasses several quantities useful in finance and insurance.
\begin{definition}[Multidimensional Self-exciting Process with Dependencies : MSPD]
	\label{def:MSPD}
	Let $(\Omega,\mathcal F,\P\otimes\bnu,\mathbb F:=(\mathcal F_t)_{t\geq 0})$ be a filtered probability space, $\bZeta$ a \textit{d-}kernel and $\bP$ a self-excitation \textit{d-}kernel. Moreover, let $\bM:=(\boldsymbol{\mu}^{i})_{1\leq i \leq d}$ a family of   functions  $\boldsymbol{\mu}^i : \R^+ \rightarrow \R^+$ representing  the baseline intensity. A $(\bZ,\bM,\bP)-$MSPD  $\boldsymbol{Z}:=(\myZ{})_{1\leq i \leq d}$  is an $\mathbb R^d$-valued stochastic process $(\boldsymbol{Z}_t)_{t\geq 0}$ where for every $i \in \{1,\ldots,d\}$,
	the pair $(\myZ{},\bLamb^i)$ is solution of  the two-dimensional SDEs driven by the Poisson measure $\bN$,  
	\begin{equation}\label{eq:MSPD}
	\left\lbrace
	\begin{array}{l}
		\myZ{T} := \int_{\bX_T}\bZeta^{i,k}(T-t,y) \ind{\theta \leq \bLamb_t^k} \bN(dk,dt,d\theta,dy) \\ \hspace{12cm} T\geq 0 \\
		\bLamb_T^i:= \bM^i(T) + \int_{\bX_T}\bP^{i,k}(T-t,y) \ind{\theta \leq \bLamb_t^k} \bN(dk,dt,d\theta,dy). \\
	\end{array}
	\right. 
\end{equation}
In order to easily define an $(\bZ,\bM,\bP)-$MSPD, we always adopt the  following notation:  the first component $\bZ$ refers  to the \textit{d-}kernel, the second $\bM$ to the baseline intensity, and the third component $\bP$ to the self-excitation \textit{d-}kernel.


\end{definition}

\begin{remark}
	Let us emphasize that the process $\bLamb^i$ is  predictable, while $\myZ{}$ is not. Indeed,  when integrating with respect to  $\bN(dt)$, $\myZ{T}$ can be charged by a point $\bx_n$ having an arrival time $\t_n = T$ (assuming $\ind{\theta_n \leq \bLamb_T^{k_n}}=1$, $\myZ{T} = \myZ{T-} + \bZeta(0,y_n)$) whereas $\lambda_t^i$ can not since $\bP$ is a self-excitation \textit{d-}kernel (therefore satisfying $\bP(0,y) = 0$). 
	One standard kernel  in the literature is the exponential kernel defined by $\phi(u):=\alpha e^{-\beta u}$. This kernel can be adapted in order to fit under the framework of this paper by using $\phi(u):=\alpha e^{-\beta u}\ind{u>0}$.
\end{remark}

\begin{remark}[$\bLamb$ as an MSPD]\label{remark:lambda_MSPD}
	  (\ref{eq:MSPD})  implies that  $\bLamb_T - \bM(T)$ is also an $(\bP,\bM,\bP)-$MSPD, where  $\bP$ is a self-excitation \textit{d-}kernel.
\end{remark}

\begin{remark}[(Compound) Hawkes process as an MSPD]
As discussed  after \eqref{equation:Z_1} in the introduction, this model contains the Hawkes process (for $\bZeta(t,y)=\boldsymbol{\bI_d}$) and the compound Hawkes process (for $\bZeta(t,y)= y \boldsymbol{\bI_d}$)  as particular cases. If the excitation kernel $\bP$  does not depend on $y$,   then there is no impact of the claims sizes (severity component) on the time-arrivals of claims (frequency component).
\end{remark}

Our objective is to deploy a general methodology for the computation of different quantities in the space of random Poisson measures. This methodology is based on two essential tools, namely Mecke's formula and the pseudo-chaotic expansion \cite{Caroline_Anthony_chaotic}, which requires the introduction of some operators on this space.  
In the following section, we define elements of Malliavin's calculus in order to exploit Mecke's formula through the pseudo-chaotic expansion.

\section{Elements of Malliavin's calculus}

\subsection{Spaces and the add point operator}
We introduce some elements of Malliavin's calculus on Poisson processes. We set 
$$ L^0(\Omega):=\left\{ F:\Omega \to \real, \; \mathcal{F}^{\bN}-\textrm{ measurable}\right\},$$
$$ L^2(\Omega):=\left\{ F \in L^0(\Omega), \; \E[|F|^2] <+\infty\right\}.$$
Fix $n \in \N^*$. We set $\br^{\otimes n}$ the extension of $\br$ on $(\bX^n,\mathfrak{X}^{\otimes n})$.
Let
$$L^0(\bX^n) := \left\{f:\bX^n \to \real, \; \mathfrak X^{\otimes n}-\textrm{measurable } \right\}$$
and for $p\in\{1,2\}$,
\begin{equation}
	\label{definition:L2j}
	L^p(\bX^n) := \left\{f \in L^0(\bX^n),   \; \int_{\bX^n} |f(\bx_1,\cdots,\bx_n)|^p \br^{\otimes n}(d\bx_1 \cdots d\bx_n) <+\infty\right\}.
\end{equation}
Besides,
\begin{equation}
	\label{definition:L2js}
	L^p_s(\bX^n) := \left\{f \in L^p(\bX^n) \textrm{ and } f \textrm{ is symmetric} \right\}
\end{equation}
is the space of square-integrable symmetric mappings where we recall that $f:\bX^n \to \real$ is said to be symmetric if for any element $\sigma$ in $\mathcal S_n$ (the set of all  permutations of $\{1,\cdots,n\}$), 
$$ f(\bx_1,\ldots, \bx_n) = f(\bx_{\sigma(1)},\ldots, \bx_{\sigma(n)}), \quad \forall (\bx_1,\ldots,\bx_n) \in \bX^n.  $$

\noindent
The main ingredient we will make use of are the add-points operators on the Poisson space $\Omega$.  For any finite set $J$, we set $|J|$ its cardinal. 
\begin{definition}$[$Add-points operators$]$\label{definitin:shifts}
	Given $n\in \mathbb N^*$, and $J:=\{\bx_1,\ldots,\bx_n\} \subset \bX$ a subset of $\bX$ with $|J|=n$,
	we set the measurable mapping :
	\begin{eqnarray*}
		\varepsilon_{J}^{+,n} : \Omega & \longrightarrow & \Omega  \\
		\bO & \longmapsto   & \bO + \sum_{\bx \in J} \delta_{\bx} \ind{\bO(\{\bx\})=0}.
	\end{eqnarray*}
	Note that by definition 
	$$  \bO + \sum_{\bx \in J} \delta_{\bx} \ind{\bO(\{\bx\})=0} = \bO + \sum_{m=1}^{n} \delta_{\bx_j} \ind{\bO(\{\bx_j\})=0} $$
	that is we add the atoms $\bx_j$ to the path $\bO$ unless they already were part of it (which is the meaning of the term $\ind{\bO(\{\bx_j\})=0}$). Note that since $\br$ is assumed to be atomless, given a set $J$ as above, $\P[\bN(J)=0]=1$ hence in what follows we will simply write $\bO + \sum_{j=1}^{n} \delta_{\bx_j}$ for $\varepsilon_{\bold{x}}^{+,n} (\bO)$.  
\end{definition}

\subsection{The Malliavin derivative}

\noindent In the context of Malliavin's calculation, a tool that is often mentioned is the associated derivative.

\begin{definition}
	\label{definition:Dn}
	For $F$ in $L^2(\Omega)$, $n\in \mathbb N^*$, $(\bx_1,\ldots,\bx_n) \in \bX^n$, we set 
	\begin{equation}
		\label{eq:Dn}
		\bold{D}_{(\bx_1,\ldots,\bx_n)}^{n} F:= \sum_{J \subset \{\bx_1,\ldots,\bx_n\}} (-1)^{n-|J|} F\circ \varepsilon_{J}^{+,|J|},
	\end{equation}
	where we recall that $\emptyset \subset \bX$.
	For instance when $n=1$, we write  $\bold{D}_{\bx} F := \bold{D}_{\bx}^1 F = F(\cdot + \delta_{\bx}) - F(\cdot)$ which is the difference operator (also called add-one cost operator\footnote{see \cite[p.~5]{Last2016}}). Relation (\ref{definition:Dn}) rewrites as 
	$$ \bold{D}^n_{(\bx_1,\ldots,\bx_n)} F (\bO)= \sum_{J\subset \{1,\ldots,n\}} (-1)^{n-|J|} F\left(\omega + \sum_{j \in J} \delta_{\bx_j}\right), \quad \textrm{ for a.e. } \bO \in \Omega.$$
	Note that with this definition, for any $\bO$ in $\Omega$, the mapping 
	$$ (\bx_1,\ldots,\bx_n) \mapsto \bold{D}_{(\bx_1,\ldots,\bx_n)}^n F (\bO) $$
	belongs to $L^0_s(\bX^n)$ defined as (\ref{definition:L2js}).
\end{definition}

\begin{remark}
	\label{rk:Dderm}
	If $F$ is deterministic, then by definition $\bD^n F=0$ for any $n\geq 1$.
\end{remark}

The chaotic-chaotic expansion developed in \cite{Caroline_Anthony_chaotic} involves deterministic operators  $ \T^n$  which correspond to  iterated  Malliavin derivatives  under the specific event $\{\bN(\bX)=0\}$.

\begin{definition}
	\label{definition:patwisederivative}
	For $F\in L^0(\Omega)$, we define the deterministic operators: 
	$$ \T^0 F:= F(\emptyset),$$ 
	$$
	\left.\begin{array}{lll}
		\T^n:& L^0(\Omega) & \to L^0_s(\bX^n) \\
		&F &\mapsto \T^n F 
	\end{array}\right.
	$$
	where for any $(\bx_1,\cdots,\bx_n) \in \bX^n$, $\T^n_{(\bx_1,\cdots,\bx_n) }$ is defined by 
	$$ \T^n_{(\bx_1,\cdots,\bx_n)} F := \sum_{J\subset \{\bx_1,\cdots,\bx_n\}} (-1)^{n-|J|} F(\varpi_{J}),$$
	with $\varpi_{J} := \sum_{y_i\in J} \delta_{\by_i} \in \Omega$ for $J=\{\by_1,\ldots,\by_k\}$.
\end{definition}

\noindent
In particular, even though $F$ is a random variable, $\T^n_{(\bx_1,\cdots,\bx_n)} F$ is in $\mathbb R$ as each
term $F(\varpi_{J})$ is the evaluation of $F$ at the outcome $\varpi_{J}$. Moreover, this operator belongs to $L^1_s(\bX^n)$.

\begin{convention}[Ordered points]\label{convention:orderpoint}
	Since the operator  $\T^n_{(\bx_1,\cdots,\bx_n)} F$ is symmetric, we will assume that the points $(\bx_1,\ldots,\bx_n) \in \bX^n$ are always taken ordered with respect to the time component $t$.
\end{convention}
\subsection{Factorial measures and iterated integrals} 

\begin{prop}(Factorial measures; See \textit{e.g.} \cite[Prop 1]{Last2016}).
	\label{prop:factormeas}
	There exists a unique sequence of counting random measures $(\bN^{(n)})_{n\in \mathbb N^*}$ where for any $n$, $\bN^{(n)}$ is a counting random measure on $(\bX^n, \mathfrak X^{\otimes n})$ with
	\begin{align*}
		&\bN^{(1)}:=\bN  \quad \mbox{ and \quad  for } \quad  A \in \mathfrak X^{\otimes (n+1)},\\
		&\bN^{(n+1)}(A) \\ 
		&:= \int_{\bX^n} \left[ \int_\bX \ind{(\bx_1,\ldots,\bx_{n+1})\in A} \bN(d\bx_{n+1}) - \sum_{j=1}^n \ind{(\bx_1,\ldots,\bx_{n},\bx_j)\in A} \right] \bN^{(n)}(d\bx_1,\ldots,d\bx_n); 
	\end{align*}
\end{prop}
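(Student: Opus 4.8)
The plan is to prove the proposition by an explicit pathwise construction of the measures $\bN^{(n)}$ combined with an induction on $n$, following the classical treatment of factorial measures of a point process (as in \cite{Last2016}). Fix a configuration $\bO\in\Omega$ and write $\bO=\sum_{j=1}^{\kappa}\delta_{\bx_j}$ with $\kappa\in\N\cup\{+\infty\}$, the atoms $\bx_j\in\bX$ being enumerated with their multiplicities (any enumeration works). For $n\in\N^*$ set
\[
\bN^{(n)}(\bO):=\sum_{(j_1,\dots,j_n)}^{\neq}\delta_{(\bx_{j_1},\dots,\bx_{j_n})},
\]
the sum running over all $n$-tuples $(j_1,\dots,j_n)$ of pairwise distinct indices in $\{1,\dots,\kappa\}$. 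Then $\bN^{(1)}=\bN$, each $\bN^{(n)}(\bO)$ is a counting measure on $(\bX^n,\mathfrak X^{\otimes n})$, and $\bO\mapsto\bN^{(n)}(\bO)(A)$ is measurable for every $A\in\mathfrak X^{\otimes n}$: one checks this first for product sets $A=B_1\times\cdots\times B_n$, where the quantity is a countable algebraic expression in the measurable maps $\bO\mapsto\bN(\bO)(B_i)$, and then extends it to all of $\mathfrak X^{\otimes n}$ by a monotone class argument.

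The next step is to verify that this construction satisfies the stated recursion (existence). Fix $n\ge1$ and $A\in\mathfrak X^{\otimes(n+1)}$. For a fixed $n$-tuple $(\bx_{j_1},\dots,\bx_{j_n})$ with distinct indices one has $\int_\bX\ind{(\bx_{j_1},\dots,\bx_{j_n},\bx)\in A}\,\bN(\bO)(d\bx)=\sum_{j=1}^{\kappa}\ind{(\bx_{j_1},\dots,\bx_{j_n},\bx_j)\in A}$, and subtracting $\sum_{m=1}^{n}\ind{(\bx_{j_1},\dots,\bx_{j_n},\bx_{j_m})\in A}$ removes exactly the terms with $j\in\{j_1,\dots,j_n\}$. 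Summing the resulting identity over all distinct $n$-tuples $(j_1,\dots,j_n)$ — that is, integrating against $\bN^{(n)}(\bO)$ — produces precisely $\sum^{\neq}_{(j_1,\dots,j_{n+1})}\ind{(\bx_{j_1},\dots,\bx_{j_{n+1}})\in A}=\bN^{(n+1)}(\bO)(A)$, because dropping the last coordinate sets up a bijection between distinct $(n+1)$-tuples and distinct $n$-tuples each extended by a fresh index. Since every term is non-negative, the exchanges of sums and integrals are legitimate by Tonelli/monotone convergence, and the same positivity shows that $A\mapsto(\text{right-hand side of the recursion})$ is $\sigma$-additive, hence a genuine $\{0,1,\dots,+\infty\}$-valued measure.

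Uniqueness then follows by induction on $n$: if $(\widetilde{\bN}^{(n)})_{n\ge1}$ is any family of counting random measures with $\widetilde{\bN}^{(1)}=\bN$ obeying the same recursion, then $\widetilde{\bN}^{(1)}=\bN=\bN^{(1)}$, and whenever $\widetilde{\bN}^{(n)}=\bN^{(n)}$ ($\P$-a.s.) the recursion expresses $\widetilde{\bN}^{(n+1)}(A)$ as the same fixed functional of $\bN$ and $\bN^{(n)}$ that defines $\bN^{(n+1)}(A)$, for all $A\in\mathfrak X^{\otimes(n+1)}$, so that $\widetilde{\bN}^{(n+1)}=\bN^{(n+1)}$.

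The main obstacle is not the (elementary) combinatorics but the measure-theoretic bookkeeping: producing the jointly measurable dependence of the pathwise construction on $\bO$ (equivalently, a measurable enumeration of the atoms, or the rectangle-by-rectangle verification above), and checking that the recursive formula genuinely defines a $\sigma$-additive measure in its set argument rather than a merely finitely additive set function. Both points are handled by reducing everything to integrals of non-negative indicator functions, where monotone convergence together with a monotone class / Dynkin argument apply. I also note that since $\br$ is atomless and $\bN$ is Poisson, $\P$-a.s. $\bN$ is simple, so that $\P$-a.s. the symbol $\sum^{\neq}$ reduces to a sum over distinct atoms; this simplifies the almost-sure statements but not the pathwise construction, which must be carried out for every $\bO\in\Omega$.
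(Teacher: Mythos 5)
Your construction is correct and is precisely the standard argument behind the cited result: the paper itself offers no proof of Proposition \ref{prop:factormeas} (it is imported from \cite[Prop.~1]{Last2016}), and your explicit pathwise definition $\bN^{(n)}=\sum^{\neq}_{(j_1,\dots,j_n)}\delta_{(\bx_{j_1},\dots,\bx_{j_n})}$, the verification of the recursion via the bijection between distinct $(n+1)$-tuples and distinct $n$-tuples extended by a fresh index, and the inductive uniqueness argument are exactly how Last--Penrose establish it. The measure-theoretic points you flag (measurable enumeration of atoms, checking measurability on rectangles plus a monotone class argument, and $\sigma$-additivity via monotone convergence) are the genuine content and are handled appropriately, especially since the configuration space \eqref{eq:Omega} already equips each $\bO$ with an enumeration of its atoms.
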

\noindent With this definition at hand we introduce the notion of iterated integrals. In particular for $A \in \mathfrak X$, 
$$ \bN^{(n)}(A^{\otimes n}) = \bN(A) (\bN(A)-1) \times \cdots \times (\bN(A)-n+1).$$
Note that by definition $\bN^{(n)}(A) \ind{\bN(A) < n}=0$. We now turn to the definition of iterated integrals with respect to the counting measure $\bN^{(n)}(d\bx_1,\ldots,d\bx_n)$.

\begin{definition}
	\label{definition:interated}
	Let $n \in \mathbb N^*$ and $f_n \in L^1_s(\bX^n)$.
		$ \boldsymbol{\mathcal{I}}_n(f_n)$ the $n^{th}$ iterated integral of $f_n$ with respect to  the Poisson measure $\bN$ is defined as 
		$$ \boldsymbol{\mathcal{I}}_n(f_n):= \int_{\bX^{n}}  f_n(\bx_1,\ldots,\bx_n) \; \bN^{(n)}(d\bx_1,\ldots,d\bx_n),$$
		where each of the integrals above is well-defined pathwise for $\P$-a.e. for each \textit{$\bO \in \Omega$, as a   Stieltjes integral }.

\end{definition}

\subsection{The Mecke formula}
We end this section by recalling a particular case of Mecke's formula (see \textit{e.g.} \cite[Relation (11)]{Last2016}).

\begin{lemma}[A particular case of Mecke's formula]
	\label{lemma:Mecke}
		Let $F \in L^0(\Omega)$, $n\in \mathbb N$ and $h\in L^0(\bX^n)$ 
	such that
$$\int_{\bX^n} |h(\bx_1,\ldots,\bx_n)| \E\left[|F\circ \varepsilon_{\bx_1,\ldots,\bx_n}^{+,n}|\right]  \br^{\otimes n}(d\bx_1,\ldots,d\bx_n)<+\infty.$$

Then 
	\begin{equation}\label{eq:Mecke_1}
	\E\left[F \int_{\bX^n} h d\bN^{(n)}\right] = \int_{\bX^n} h(\bx_1,\ldots,\bx_n) \E\left[F\circ \varepsilon_{\bx_1,\ldots,\bx_n}^{+,n}\right] \br^{\otimes n}(d\bx_1,\ldots,d\bx_n).
	\end{equation}
By taking $F=1$ we get
	\begin{align}\label{eq:Mecke_2}
		\E\left[\bpI_n(f_n)\right] &= \int_{\bX^n}f_n(\bx_1,\dots,\bx_n)\br^{\otimes n}(d\bx_1,\ldots,d\bx_n) \nonumber\\
		&=\sum_{k_1=1}^d \dots \sum_{k_n=1}^d \int_{([0,T]\times \R_+ \times \R_+)^n}f_n(\bx_1,\dots,\bx_n)\prod_{i=1}^n\bnu^{k_i}(dy_i)d\theta_i dt_i.
	\end{align}
\end{lemma}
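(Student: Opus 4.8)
The plan is to obtain this statement as the product-integrand specialisation of the \emph{multivariate Mecke equation}, which I would establish by induction on $n$ starting from the classical univariate Mecke equation. The only external input I rely on is that, for every measurable $u:\bX\times\Omega\to\R_+$,
\begin{equation}\label{eq:Mecke_base}
\E\left[\int_\bX u(\bx,\bN)\,\bN(d\bx)\right]=\int_\bX \E\left[u(\bx,\bN+\delta_\bx)\right]\br(d\bx)
\end{equation}
(see \cite[Relation (11)]{Last2016}), together with the recursive construction of the factorial measures in Proposition \ref{prop:factormeas}.

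First I would reduce the claim to the following ``random integrand'' form: for every measurable $u:\bX^n\times\Omega\to\R_+$,
\begin{equation}\label{eq:MeckeRandom}
\E\left[\int_{\bX^n} u(\bx_1,\ldots,\bx_n,\bN)\,\bN^{(n)}(d\bx_1,\ldots,d\bx_n)\right]=\int_{\bX^n}\E\left[u\left(\bx_1,\ldots,\bx_n,\bN+\textstyle\sum_{i=1}^n\delta_{\bx_i}\right)\right]\br^{\otimes n}(d\bx_1,\ldots,d\bx_n).
\end{equation}
Granting \eqref{eq:MeckeRandom}, identity \eqref{eq:Mecke_1} follows by taking $u(\bx_1,\ldots,\bx_n,\bO)=F(\bO)\,h(\bx_1,\ldots,\bx_n)$, decomposing $F=F^+-F^-$ and $h=h^+-h^-$, and applying \eqref{eq:MeckeRandom} to the four nonnegative products; the hypothesis $\int_{\bX^n}|h|\,\E[|F\circ\varepsilon_{\bx_1,\ldots,\bx_n}^{+,n}|]\,\br^{\otimes n}(d\bx_1,\ldots,d\bx_n)<+\infty$ is exactly what guarantees that each piece is finite and that the decomposition can be reassembled by Fubini's theorem.

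The core of the argument is the induction proving \eqref{eq:MeckeRandom}. For $n=0$ it is trivial and for $n=1$ it is \eqref{eq:Mecke_base}. For the step $n\to n+1$, I would insert the recursion of Proposition \ref{prop:factormeas} into the left-hand side, writing $\int_{\bX^{n+1}}u\,d\bN^{(n+1)}=\int_{\bX^n} v(\bx_1,\ldots,\bx_n,\bN)\,\bN^{(n)}(d\bx_1,\ldots,d\bx_n)$ with $v(\bx_1,\ldots,\bx_n,\bO):=\int_\bX u(\bx_1,\ldots,\bx_n,\bx_{n+1},\bO)\,\bO(d\bx_{n+1})-\sum_{j=1}^n u(\bx_1,\ldots,\bx_n,\bx_j,\bO)$. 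Applying the rank-$n$ identity to $v$ and then expanding $\bN+\sum_{i=1}^n\delta_{\bx_i}$ in the first term of $v$ as $\bN$ plus the $n$ added atoms, one sees that the contribution of those atoms is precisely $\sum_{j=1}^n u(\bx_1,\ldots,\bx_n,\bx_j,\bN+\sum_{i}\delta_{\bx_i})$, which cancels the diagonal correction carried by $v$. What remains is an ordinary integral against $\bN(d\bx_{n+1})$, to which \eqref{eq:Mecke_base} applies with $\bx_1,\ldots,\bx_n$ held fixed; a final use of Tonelli's theorem merges the integrations over $\bX^n$ and over $\bX$ into one over $\bX^{n+1}$, yielding \eqref{eq:MeckeRandom} at rank $n+1$.

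It then remains to record \eqref{eq:Mecke_2}: setting $F=1$ in \eqref{eq:Mecke_1} and recalling $\bpI_n(f_n)=\int_{\bX^n}f_n\,d\bN^{(n)}$ gives the first line, and the second line is obtained by unfolding the definition of $\br$, namely $\br(d\bx)=\sum_{k=1}^d\bnu^k(dy)\,d\theta\,dt$, so that $\br^{\otimes n}(d\bx_1,\ldots,d\bx_n)=\sum_{k_1=1}^d\cdots\sum_{k_n=1}^d\prod_{i=1}^n\bnu^{k_i}(dy_i)\,d\theta_i\,dt_i$, with the $t_i$ confined to $[0,T]$ since $f_n$ is supported on $\bX_T^n$. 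The step I expect to require the most care is the inductive one: tracking which copies of $\bx_1,\ldots,\bx_n$ are introduced by the add-points operator versus summed over by the factorial-measure correction, and verifying that the diagonal terms cancel exactly; the only other delicate point is checking that the integrability hypothesis is precisely what legitimises Fubini when passing from the nonnegative form \eqref{eq:MeckeRandom} back to the signed identity \eqref{eq:Mecke_1}.
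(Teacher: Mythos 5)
The paper gives no proof of this lemma at all: it is explicitly \emph{recalled} from the literature, with a pointer to Relation (11) of Last's survey, so your task here was really to reconstruct the standard proof of the multivariate Mecke equation, and you have done so correctly. Your induction is exactly the classical argument (cf.\ Last--Penrose, Theorem 4.4): insert the recursive definition of $\bN^{(n+1)}$ from Proposition \ref{prop:factormeas}, apply the rank-$n$ identity, observe that the $n$ atoms contributed by $\bN+\sum_i\delta_{\bx_i}$ in the inner integral cancel the diagonal correction, and finish with the univariate Mecke equation in the last coordinate followed by Tonelli. The reduction of \eqref{eq:Mecke_1} to the nonnegative ``random integrand'' form via $F=F^+-F^-$, $h=h^+-h^-$, with the stated integrability hypothesis justifying the reassembly, is also the right mechanism, and the derivation of \eqref{eq:Mecke_2} is immediate as you say. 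One technical point deserves more care than you give it: you state \eqref{eq:MeckeRandom} for \emph{nonnegative} $u$, but the auxiliary integrand $v(\bx_1,\ldots,\bx_n,\bO)=\int_\bX u\,d\bO-\sum_j u(\ldots,\bx_j,\bO)$ to which you apply the rank-$n$ identity is signed. The fix is standard --- apply the induction hypothesis separately to the two nonnegative pieces of $v$ and subtract, checking finiteness (or restrict first to bounded $u$ with $\br$-finite support and pass to the limit by monotone convergence) --- but as written the inductive step invokes the hypothesis outside its stated scope, so you should add that sentence.
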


\section{Pseudo-chaotic expansion for counting processes}
The second ingredient in our analysis relies on the pseudo-chaotic expansion. This section presents some essential results around this expansion, with a focus on the process $\boldsymbol{Z}$.

\subsection{Around the pseudo-chaotic expansion of $\boldsymbol{Z}$}


{The following theorem provides the  peudo-chaotic expansion for a $(\bZ,\bM,\bP)-$MSPD,  following   results from \cite{Caroline_Anthony_chaotic}.}

\begin{theorem}
	\label{th:pseudochaoticcounting}
	Let $\boldsymbol{Z}$ a $(\bZ,\bM,\bP)-$MSPD, we recall that this process is given by the SDE(\ref{eq:MSPD}):
	$$
	\left\lbrace
	\begin{array}{l}
		\myZ{T} := \int_{\bX_T}\bZeta^{i,k}(T-t,y) \ind{\theta \leq \bLamb_t^k} \bN(dk,dt,d\theta,dy) \\ \hspace{12cm} T\geq 0 \\
		\bLamb_T^i:= \bM^i(T) + \int_{\bX_T}\bP^{i,k}(T-t,y) \ind{\theta \leq \bLamb_t^k} \bN(dk,dt,d\theta,dy). \\
	\end{array}
	\right. 
	$$
	Then $\boldsymbol{Z}^{i}_T$ admits the pseudo-chaotic representation 
	\begin{equation}\label{eq:pseudo_chao}
		\myZ{T} = \lim_{M\to \infty}\sum_{n=1}^{+\infty}\frac{1}{n!}\bpI_n\left(\T^n_{(\bx_1,\cdots,\bx_n)}\myZ{T}\ind{([0,T]\times[0,M])^n}\right)
	\end{equation}
		
	with for all $(\bx_1,\ldots,\bx_n) \in \bX^n$,
	\begin{equation}
		\label{eq:ckcounting}
		\hspace*{-0.5cm}\T^n_{(\bx_1,\ldots,\bx_n)}\myZ{T} = \bZeta^{i,k_n}(T-t_n,y_n)\T^{n-1}_{(\bx_1,\dots,\bx_{n-1})}\ind{\theta_{n}\leq \bLamb^{k_n}_{t_n}}.
	\end{equation}
\end{theorem}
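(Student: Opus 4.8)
The plan is to invoke the pseudo-chaotic expansion machinery of \cite{Caroline_Anthony_chaotic} applied to the random variable $\myZ{T}$, and then reduce the computation of $\T^n_{(\bx_1,\ldots,\bx_n)}\myZ{T}$ to the simpler expression \eqref{eq:ckcounting}. For the first part, one checks that $\myZ{T}$ lies in the space on which the pseudo-chaotic representation theorem of \cite{Caroline_Anthony_chaotic} applies: since $\bZeta$ is a non-explosive (or at least integrable) $d$-kernel and $\bP$ is a self-excitation $d$-kernel, the intensity process $\bLamb$ is well-defined and integrable (as established in the Proposition on the intensity process), and consequently $\myZ{T} \in L^0(\Omega)$ with enough integrability for the representation \eqref{eq:pseudo_chao} to hold as a limit in $M$ of the truncated chaos series. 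This is essentially a citation-and-verification step: match the hypotheses of the abstract theorem to the present setting.

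The core of the work is the identity \eqref{eq:ckcounting}. First I would recall from Definition \ref{definition:patwisederivative} that $\T^n_{(\bx_1,\ldots,\bx_n)} F = \sum_{J\subset\{\bx_1,\ldots,\bx_n\}} (-1)^{n-|J|} F(\varpi_J)$, evaluating $F$ at the finite configuration $\varpi_J$. By Convention \ref{convention:orderpoint} the points are ordered in time, so $t_n$ is the largest time. Evaluating $\myZ{T}$ at the configuration $\varpi_J = \sum_{\bx\in J}\delta_{\bx}$, the SDE \eqref{eq:MSPD} becomes a finite sum over the atoms of $\varpi_J$, namely $\myZ{T}(\varpi_J) = \sum_{\bx_m\in J} \bZeta^{i,k_m}(T-t_m,y_m)\ind{\theta_m \le \bLamb^{k_m}_{t_m}(\varpi_J)}$, where $\bLamb^{k_m}_{t_m}(\varpi_J)$ depends only on the atoms of $\varpi_J$ with time coordinate strictly less than $t_m$ (predictability, since $\bP^{i,k}(0,\cdot)=0$). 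The key observation is that the term indexed by the \emph{last} point $\bx_n$ appears in $\myZ{T}(\varpi_J)$ precisely when $\bx_n \in J$, and then contributes $\bZeta^{i,k_n}(T-t_n,y_n)\ind{\theta_n\le\bLamb^{k_n}_{t_n}(\varpi_J)}$ with $\bLamb^{k_n}_{t_n}(\varpi_J) = \bLamb^{k_n}_{t_n}(\varpi_{J\setminus\{\bx_n\}})$ because $\bLamb_{t_n}$ ignores any atom at time $t_n$.

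With this in hand, I would split the alternating sum $\sum_{J} (-1)^{n-|J|}\myZ{T}(\varpi_J)$ according to whether $\bx_n\in J$. The terms with $\bx_n\notin J$ involve $\myZ{T}(\varpi_J)$ for $J\subset\{\bx_1,\ldots,\bx_{n-1}\}$, and these cancel in pairs (add/remove $\bx_n$) against the "non-$\bx_n$ part" of the terms with $\bx_n\in J$ — this is the standard telescoping that collapses $\T^n$ of any functional into a first-order difference in the last variable. What survives is exactly the contribution of the atom $\bx_n$ itself, i.e. $\sum_{J'\subset\{\bx_1,\ldots,\bx_{n-1}\}} (-1)^{n-1-|J'|}\bZeta^{i,k_n}(T-t_n,y_n)\ind{\theta_n\le \bLamb^{k_n}_{t_n}(\varpi_{J'})}$, which by Definition \ref{definition:patwisederivative} is precisely $\bZeta^{i,k_n}(T-t_n,y_n)\,\T^{n-1}_{(\bx_1,\ldots,\bx_{n-1})}\ind{\theta_n\le\bLamb^{k_n}_{t_n}}$. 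This gives \eqref{eq:ckcounting}. The main obstacle is bookkeeping the cancellation carefully — making sure that the factor $\bZeta^{i,k_n}(T-t_n,y_n)$ genuinely factors out (it does, since it does not depend on $J$) and that the dependence of $\bLamb^{k_n}_{t_n}$ on the remaining points is exactly captured by $\T^{n-1}$ applied to the indicator; this uses predictability of $\bLamb$ in an essential way and should be stated as a lemma if not already available from \cite{Caroline_Anthony_chaotic}.
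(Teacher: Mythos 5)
Your proposal is correct and follows essentially the same route as the paper: cite the pseudo-chaotic expansion of \cite{Caroline_Anthony_chaotic} for \eqref{eq:pseudo_chao}, then evaluate $\myZ{T}$ on finite configurations and collapse the alternating sum defining $\T^n$ using the predictability of $\bLamb$ (i.e.\ $\bP(0,\cdot)=0$) together with the time-ordering of the points. The only difference is cosmetic bookkeeping of the cancellation: you split on whether $\bx_n\in J$ and telescope, whereas the paper sums over the index $m$ of the contributing atom and kills every $m<n$ via the vanishing of $\sum_{J^+\subset\{\bx_{m+1},\dots,\bx_n\}}(-1)^{|J^+|}$; both arguments rest on exactly the same two facts.
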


\begin{proof}
Equation (\ref{eq:pseudo_chao}) follows from \cite{Caroline_Anthony_chaotic} which gives the pseudo-chaotic expansion of any random linear
functional of $\bN$ restricted to a bounded domain (say $[0, T] \times [0, M]$, for $T, M > 0$) of $\R_+^2$; with a
focus on random variables of the form $H_t$ where $H$ is a counting process with bounded
intensity (we refer the reader to \cite{Caroline_Anthony_chaotic} for a complete exposition). Even though the intensity of
$\myZ{T}$ is unbounded, it is proved in \cite{Caroline_Anthony_chaotic} that marginals of Hawkes processes admit
a pseudo-chaotic expansion. Then to prove (\ref{eq:ckcounting}), recalling that $\bx = (k,t,\theta,y)$,
\begin{align*}
	\T^n_{(\bx_{1},\cdots,\bx_{n})} \myZ{T} &= \sum_{J\subset \{\bx_1,\cdots,\bx_n\}} (-1)^{n-|J|} \left(\int_{\bX_T}\bZeta^{i,k}(T-t,y) \ind{\theta \leq \bLamb_t^k} \bN(dk,dt,d\theta,dy)\right) (\varpi_{J})\\
	&= \sum_{J\subset \{\bx_1,\cdots,\bx_n\}} (-1)^{n-|J|}\sum_{\bx_m\in J} \bZeta^{i,k_m}(T-t_m,y_m)\ind{\theta_m \leq \bLamb_{t_m}^{k_m} (\varpi_{J})} \\
	&= \sum_{J\subset \{\bx_1,\cdots,\bx_n\}} \sum_{\bx_m\in J}(-1)^{n-|J|} \bZeta^{i,k_m}(T-t_m,y_m)\ind{\theta_m \leq \bLamb_{t_m}^{k_m} (\varpi_{J})}\\
	&= \sum_{m=1}^n \sum_{J\subset \{\bx_1,\cdots,\bx_n\};\bx_m\in J} (-1)^{n-|J|}\bZeta^{i,k_m}(T-t_m,y_m)\ind{\theta_m \leq \bLamb_{t_m}^{k_m} (\varpi_{J} + \delta_{\bx_m}))} \\
	&= \sum_{m=1}^n  \bZeta^{i,k_m}(T-t_m,y_m)  \sum_{J\subset \{\bx_1,\cdots,\bx_{i-1},\bx_{i+1},\cdots,\bx_n\}} (-1)^{n-1-|J|} \ind{\theta_m \leq \bLamb_{t_m}^{k_m} (\varpi_{J} + \delta_{\bx_m}))} \\
\end{align*}
Then, for $m \in \{1,\cdots,n\}$ we have that
\begin{align*}
	&\sum_{J\subset \{\bx_1,\cdots,\bx_{m-1},\bx_{m+1},\cdots,\bx_n\}} (-1)^{n-1-|J|} \ind{\theta \leq \bLamb_{t_m}^{k_m} (\varpi_{J} + \delta_{\bx_m})}\\
	&= \sum_{J^-\subset \{\bx_1,\cdots,\bx_{m-1}\}}\sum_{J^+\subset \{\bx_{m+1},\cdots,\bx_n\}}(-1)^{n-1-|J^-|-|J^+|}\ind{\theta_m \leq \bLamb_{t_m}^{k_m} (\varpi_{J^- \cup J^+} + \delta_{\bx_i})}\\
	&=\sum_{J^-\subset \{\bx_1,\cdots,\bx_{m-1}\}}\ind{\theta_m \leq \bLamb_{t_m}^{k_m} (\varpi_{J^-})} (-1)^{n-1-|J^-|}\sum_{J^+\subset \{\bx_{m+1},\cdots,\bx_n\}}(-1)^{|J^+|}
\end{align*}
where the last equality follows from the predictability of $\bLamb$.
Moreover, if $\bx_m \neq \bx_n$ then,
$$
\sum_{J^+\subset \{\bx_{m+1},\cdots,\bx_n\}}(-1)^{|J^+|} = 0.
$$
And if $\bx_m = \bx_n$ then $J^+ = \emptyset$ which implies, 
$$\sum_{J^+\subset \{\bx_{m+1},\cdots,\bx_n\}}(-1)^{|J^+|} = 1.$$
Thus, 
\begin{align*}
	\T^n_{(\bx_1,\cdots,\bx_n)}\myZ{T}&= \bZeta^{i,k_n}(T-t_n,y_n)\sum_{U\subset \{\bx_1,\cdots,\bx_{n-1}\}}(-1)^{n-1-|U|}\ind{\theta_n \leq \bLamb_{t_n}^{k_n} (\varpi_{U})}\\
	&=\bZeta^{i,k_n}(T-t_n,y_n)\T^{n-1}_{(\bx_1,\cdots,\bx_{n-1})}\ind{\theta_n \leq \bLamb_{t_n}^{k_n}}.
\end{align*}

\end{proof}

\begin{remark}\label{remark:Tnlambda}
As $\bM^i(T)$ is deterministic, $\T^{n} \bM^i(T)=0$ hence from  \eqref{eq:ckcounting} we deduce that
$$\T^n_{(\bx_1,\ldots,\bx_n)} \bLamb_T^i = \bP^{i,k_n}(T-t_n,y_n)\T^{n-1}_{(\bx_1,\dots,\bx_{n-1})}\ind{\theta_{n}\leq \bLamb^{k_n}_{t_n}}.$$
\end{remark}

\begin{remark}\label{remark:pre_compatibility}
	Consider a set  $J= (\bx_1,\cdots,\bx_n)$ of ordered points  in $\bX_T$ (see Convention \ref{convention:orderpoint}). Then  $\myZ{T}$  evaluated on $J$ satisfies,  for any $\omega\in\Omega$
\begin{eqnarray*}
	\myZ{T}\left( \sum_{\bx_m\in J} \delta_{\bx_m} \right) &=& \sum_{\bx_m\in J}\bZeta^{i,k_m}(T-t_m,y_m)\ind{\theta_{m} \leq \bLamb_{t_m}^{k_m}( \sum_{\bx_j\in J}  \delta_{\bx_j })}\\
	&\leq& \sum_{m\in J}\bZeta^{i,k_m}(T-t_m,y_m)\leq \myZ{T}\left( \omega + \sum_{\bx_m\in J} \delta_{\bx_m} \right). 
\end{eqnarray*}
Since for all $(i,k)\in \{1,\dots,d\}^2$, $\bZ^{i,k}$ is positive,	 $	\myZ{T}( \sum_{j\in J} \delta_{\bx_j})$  reaches its maximum   when every point in $J$ is accepted by the 
	thinning criteria $\ind{\theta_{m} \leq {\bLamb_{t_m}^{k_m}(\sum_{j\in J} \delta_{\bx_j})}}$. In fact, if we assume that every point in $J$ verifies the thinning criteria we have for all $\bx_m \in J$,
\begin{equation}\label{eq:Caro}
	\theta_{m}\leq \bM^{k_m}
	+  \sum_{j=1}^{m-1}\bP^{k_m,k_j}(t_m-t_j,y_j) = {\bLamb_{t_m}^{k_m}(\sum_{j\in J} \delta_{\bx_j})}\leq \bLamb_{t_m}^{k_m}(\omega + \sum_{j\in J} \delta_{\bx_j}),
	\end{equation}
	where the  sum stops at $j=m - 1$ because of  the predictability of $\lambda_T^i$ (see Definition \ref{def:self-excitation}).
\end{remark}
This brings us to introduce the following lemma which establishes a compatibility condition on a set of points $\bx_j$ such that $\T^n_{(\bx_1, \dots, \bx_n)}{\myZ{t}}\neq0$.
\begin{lemma}[Compatibility condition]Let  $\boldsymbol{Z}$ a $(\bZ,\bM,\bP)-$MSPD. Fix $T \geq 0$, let $n \in \N^*$, $(\bx_1 \dots, \bx_n)\in (\bX_T)^n$ following Convention \ref{convention:orderpoint}.
	It holds that, 
	
	$$
	\T^n_{(\bx_1, \dots, \bx_n)}{\myZ{T}}=\T^n_{(\bx_1, \dots, \bx_n)}{\myZ{T}}\ind{\theta_1 \leq \bM^{k_1}(t_1)} \prod_{m=2}^{n}\ind{\theta_m \leq \bM^{k_m}(t_m)
		+  \sum_{j=1}^{m-1}\bP^{k_m,k_j}(t_m-t_j,y_j)}.
	$$
$(\bx_1 \dots, \bx_n)$ is said to satisfy the compatibility condition if
\begin{equation}\label{eq:compatibility}
	\ind{\theta_1 \leq \bM^{k_1}} \prod_{m=2}^{n}\ind{\theta_m \leq \bM^{k_m}
		+  \sum_{j=1}^{m-1}\bP^{k_m,k_j}(t_m-t_j,y_j)} = 1.
\end{equation}
\end{lemma}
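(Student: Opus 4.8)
The plan is to argue by induction on $n$, using the recursive formula \eqref{eq:ckcounting} for $\T^n_{(\bx_1,\dots,\bx_n)}\myZ{T}$ together with the analogous formula for $\T^n_{(\bx_1,\dots,\bx_n)}\bLamb_T^i$ from Remark \ref{remark:Tnlambda}, and the explicit pathwise evaluation of $\bLamb_{t}^k$ on a finite ordered configuration given by \eqref{eq:Caro} in Remark \ref{remark:pre_compatibility}. The key observation is that \eqref{eq:ckcounting} reduces everything to understanding the factor $\T^{n-1}_{(\bx_1,\dots,\bx_{n-1})}\ind{\theta_n\leq\bLamb^{k_n}_{t_n}}$, and that an indicator $\ind{\theta_n\leq\bLamb^{k_n}_{t_n}}$, when its difference operator $\T^{n-1}$ is applied and the points are ordered by time, is supported on configurations where the thinning constraint is actually met at $t_n$.

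First I would establish the base case. For $n=1$, \eqref{eq:ckcounting} gives $\T^1_{\bx_1}\myZ{T}=\bZeta^{i,k_1}(T-t_1,y_1)\,\T^0\ind{\theta_1\leq\bLamb^{k_1}_{t_1}}=\bZeta^{i,k_1}(T-t_1,y_1)\,\ind{\theta_1\leq\bM^{k_1}(t_1)}$, since $\T^0 F=F(\emptyset)$ and $\bLamb^{k_1}_{t_1}(\emptyset)=\bM^{k_1}(t_1)$ (the integral term vanishes on the empty configuration). This is exactly the claimed identity with the empty product. Next, for the inductive step, I would expand $\T^{n-1}_{(\bx_1,\dots,\bx_{n-1})}\ind{\theta_n\leq\bLamb^{k_n}_{t_n}}$ as an alternating sum over subsets $U\subset\{\bx_1,\dots,\bx_{n-1}\}$ of $\ind{\theta_n\leq\bLamb^{k_n}_{t_n}(\varpi_U)}$. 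Because the points are ordered by time and $\bP$ is a self-excitation kernel, \eqref{eq:Caro} tells us $\bLamb^{k_n}_{t_n}(\varpi_U)=\bM^{k_n}(t_n)+\sum_{j:\,\bx_j\in U,\,t_j<t_n}\bP^{k_n,k_j}(t_n-t_j,y_j)$; in particular this depends on $U$ only through which of $\bx_1,\dots,\bx_{n-1}$ it contains, and all those have $t_j\le t_n$. The crucial point is that if $\theta_n>\bM^{k_n}(t_n)+\sum_{j=1}^{n-1}\bP^{k_n,k_j}(t_n-t_j,y_j)$ — i.e.\ the thinning constraint fails even for the largest configuration $U=\{\bx_1,\dots,\bx_{n-1}\}$ — then, since $\bP\geq 0$, the constraint fails for every $U$, so every term $\ind{\theta_n\leq\bLamb^{k_n}_{t_n}(\varpi_U)}$ is zero and hence $\T^{n-1}_{(\bx_1,\dots,\bx_{n-1})}\ind{\theta_n\leq\bLamb^{k_n}_{t_n}}=0$. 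Therefore $\T^{n-1}_{(\bx_1,\dots,\bx_{n-1})}\ind{\theta_n\leq\bLamb^{k_n}_{t_n}}$ is unchanged upon multiplying by $\ind{\theta_n\leq\bM^{k_n}(t_n)+\sum_{j=1}^{n-1}\bP^{k_n,k_j}(t_n-t_j,y_j)}$, which via \eqref{eq:ckcounting} produces the $m=n$ factor in the claimed product.

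Then I would peel off the remaining factors by induction: the subexpression $\T^{n-1}_{(\bx_1,\dots,\bx_{n-1})}\ind{\theta_n\leq\bLamb^{k_n}_{t_n}}$ is itself, after fixing $\bx_n$, a $\T^{n-1}$-operator applied to a functional whose structure mirrors that of a counting-type increment, so the same argument applied recursively (or, more cleanly, the induction hypothesis applied to the pair $(\myZ{},\bLamb^i)$ noting that $\bLamb^i - \bM^i$ is an MSPD by Remark \ref{remark:lambda_MSPD}) yields the factors $\ind{\theta_m\leq\bM^{k_m}(t_m)+\sum_{j=1}^{m-1}\bP^{k_m,k_j}(t_m-t_j,y_j)}$ for $m=2,\dots,n-1$ and $\ind{\theta_1\leq\bM^{k_1}(t_1)}$. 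Collecting all factors gives the stated identity, and \eqref{eq:compatibility} is then just the statement that the product of indicators equals $1$, which is necessary (given $\bZeta\ge 0$ and \eqref{eq:ckcounting}) for $\T^n_{(\bx_1,\dots,\bx_n)}\myZ{T}\neq 0$.

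The main obstacle I anticipate is bookkeeping the recursion cleanly: the object $\ind{\theta_m\leq\bLamb^{k_m}_{t_m}}$ is not literally an MSPD increment, so invoking Remark \ref{remark:lambda_MSPD} directly requires a little care, and one must make sure the ordering Convention \ref{convention:orderpoint} is used consistently so that \eqref{eq:Caro} applies at every stage (in particular that the truncation of sums at $j=m-1$ is legitimate at each level of the recursion, which is exactly where predictability of $\bLamb$ enters). Once the ordering and the monotonicity of $\bLamb^{k_m}_{t_m}(\varpi_U)$ in $U$ (consequence of $\bP\ge 0$) are set up, each individual vanishing argument is the same one-line computation as in the base case.
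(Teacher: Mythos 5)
Your base case and your treatment of the last index are correct and coincide with the paper's: since $\bLamb_{t_n}^{k_n}(\varpi_U)\leq \bM^{k_n}(t_n)+\sum_{j=1}^{n-1}\bP^{k_n,k_j}(t_n-t_j,y_j)$ for every $U\subset\{\bx_1,\dots,\bx_{n-1}\}$, all the indicators in the alternating sum vanish simultaneously when $\theta_n$ exceeds that bound, which produces the factor $m=n$. The gap is in how you obtain the factors $m\in\{2,\dots,n-1\}$. You propose to ``peel them off by the same argument applied recursively'', invoking the induction hypothesis via Remark \ref{remark:lambda_MSPD}. This does not go through: the object left after one application of \eqref{eq:ckcounting} is $\T^{n-1}_{(\bx_1,\dots,\bx_{n-1})}\ind{\theta_n\leq\bLamb^{k_n}_{t_n}}$, and the indicator is a \emph{nonlinear} functional of $\bN$ — it is not the marginal of an MSPD and does not satisfy a factorization of the form \eqref{eq:ckcounting}, so there is no second step of the recursion to run. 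Moreover, the mechanism that makes the intermediate factors appear is genuinely different from the one you use at index $n$: for $\ell\in\{2,\dots,n-1\}$, on the event $\{\theta_\ell>\bM^{k_\ell}(t_\ell)+\sum_{j=1}^{\ell-1}\bP^{k_\ell,k_j}(t_\ell-t_j,y_j)\}$ the individual indicators $\ind{\theta_n\leq\bLamb_{t_n}^{k_n}(\varpi_J)}$ do \emph{not} all vanish; what happens is that the rejected point $\bx_\ell$ is invisible to the intensity, i.e.
\begin{equation*}
\bLamb_{t_n}^{k_n}\Bigl(\delta_{\bx_\ell}+\sum_{p\in J}\delta_{\bx_p}\Bigr)=\bLamb_{t_n}^{k_n}\Bigl(\sum_{p\in J}\delta_{\bx_p}\Bigr),
\end{equation*}
so the terms indexed by $J$ and $J\cup\{\ell\}$ carry opposite signs and cancel pairwise. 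This pairing/cancellation argument is what the paper uses for every intermediate index, and it is the missing ingredient in your plan; your closing claim that ``each individual vanishing argument is the same one-line computation as in the base case'' is precisely where the proof breaks. To repair the proposal, replace the recursive peeling by the direct splitting of the sum over $J\subset\{1,\dots,n-1\}$ into subsets containing $\ell$ and not containing $\ell$, and justify the displayed identity using the predictability of $\bLamb$ and the fact that $\bx_\ell$ fails the thinning criterion in the maximal configuration (hence in every configuration, by monotonicity).
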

\begin{proof}
	By applying (\ref{eq:ckcounting}) from Theorem \ref{th:pseudochaoticcounting} to $\boldsymbol{Z}$ and by definition of the operator $\T$ we have that,
	$$
	\T^n_{(\bx_1, \dots, \bx_n)}{\myZ{T}}= \bZeta^{i,k_n}(T-t_n,y_n) \sum_{J\subset\{1,\dots,n-1\}}(-1)^{n-1-|J|} 
	\ind{\theta_{n} \leq \lambda_{t_n}^{k_n}(\sum_{p \in J}\delta_{\bx_p})}.
	$$
	Hence, inequality \eqref{eq:Caro} (with $m=n$) implies
	$$
	\T^n_{(\bx_1, \dots, \bx_n)}{\myZ{T}}\ind{\theta_n > \mu^{k_n} + \sum_{m = 1}^{n-1}\bP^{k_n,k_m}(t_n - t_m,y_m)}=0.
	$$
	\\
	Let $ \ell\in\{2,\dots,n\}$,
	\begin{align*}
		&\T^n_{(\bx_1, \dots, \bx_n)}{\myZ{T}}\ind{\theta_\ell > \bM^{k_\ell}(t_\ell) + \sum_{m = 1}^{\ell-1}\bP^{k_\ell,k_m}(t_k - t_m,y_m)}\\
		&= \bZeta^{i,k_n}(T-t_n,y_{n}) \ind{\theta_\ell > \bM^{k_\ell}(t_\ell) + \sum_{m = 1}^{\ell-1}\bP^{k_\ell,k_m}(t_k - t_m,y_m)} \sum_{J\subset\{1,\dots,n-1\}}(-1)^{n-1-|J|} 
		\ind{\theta_{n} \leq \bLamb_{t_n}^{k_n}(\sum_{p \in J}\delta_{\bx_p})}\\
		&=\bZeta^{i,k_n}(T-t_n,y_{n}) \ind{\theta_\ell > \bM^{k_\ell}(t_\ell) + \sum_{m = 1}^{\ell-1}\bP^{k_\ell,k_m}(t_k - t_m,y_m)} \\
		&\left(\sum_{J\subset\{1,\dots,n-1\}; \ell \in J}(-1)^{n-1-|J|} 
		\ind{\theta_{n} \leq \bLamb_{t_n}^{k_n}(\sum_{p \in J}\delta_{\bx_p})} + \sum_{J\subset\{1,\dots,n-1\}; \ell \notin J}(-1)^{n-1-|J|} 
		\ind{\theta_{n} \leq \bLamb_{t_n}^{k_n}(\sum_{p \in J}\delta_{\bx_p})}\right)\\
		&=\bZeta^{i,k_n}(T-t_n,y_{n}) \ind{\theta_\ell > \bM^{k_\ell}(t_\ell)  +\sum_{m = 1}^{\ell-1}\bP^{k_\ell,k_m}(t_k - t_m,y_m)} \\
		&(\sum_{J\subset\{1,\dots,\ell-1,\ell+1,\dots,n-1\}} \hspace*{-1cm} (-1)^{n-2-|J|} 
		\ind{\theta_{n} \leq \bLamb_{t_n}^{k_n}(\delta_{x_\ell}+\sum_{p \in J}\delta_{\bx_p})} 
		 + \sum_{J\subset\{1,\dots,\ell-1,\ell+1,\dots,n-1\}}   \hspace*{-1cm} (-1)^{n-1-|J|} 
		\ind{\theta_{n} \leq \bLamb_{t_n}^{k_n}(\sum_{p \in J}\delta_{\bx_p})}).
	\end{align*}
	Since on the domain
	$\{\theta_\ell > \bM^{k_\ell}(t_\ell) + \sum_{m = 1}^{\ell}\bP^{k_\ell,k_m}(t_\ell - t_m,y_m)\}$
	we have that,
	$$\ \bLamb_{t_n}^{k_n}(\delta_\ell + \sum_{p \in J}\delta_{\bx_p})=\bLamb_{t_n}^{k_n}(\sum_{p \in J}\delta_{\bx_p}), $$
	therefore for any  $ \ell\in\{2,\dots,n\}$
	$$
	\T^n_{(\bx_1, \dots, \bx_n)}{\myZ{T}}\ind{\theta_\ell > \bM^{k_\ell}(t_\ell) + \sum_{m = 1}^{\ell-1}\bP^{k_\ell,k_m}(t_\ell - t_m,y_m)}
	= 0 .
	$$
For $\ell=1$
	$$
	\T^n_{(\bx_1, \dots, \bx_n)}{\myZ{T}}= \ind{\theta_{1} \leq \bLamb_{t_1}^{k_1}(\delta_{\bx_1})}=\ind{\theta_{1} \leq \bM^{k_1}(t_1)}
	$$
	which concludes the proof.
\end{proof}

\subsection{Shifted processes}
Combining  Mecke's formula  (\ref{eq:Mecke_1}) 
and the pseudo-chaotic expansion (Theorem \ref{th:pseudochaoticcounting})  involves processes of the form $F\circ \varepsilon_{\bx_1,\ldots,\bx_n}^{+,n}$ that we call shifted processes. An in-depth study of these processes for $F= \myZ{}$, is of interest, as they can be interpreted as stressed scenarios.

\begin{remark}[A consequence of the compatibility condition]
We would like to enhance an important consequence of the compatibility condition when shifting the intensity of an MSPD.
Let $\boldsymbol{Z}$ a $(\bZ,\bM,\bP)-$MSPD. Let $n \in \N^*$, $(\bx_1 \dots, \bx_n)\in (\bX_T)^n$ satisfying the compatibility condition (\ref{eq:compatibility}), then for any $\omega \in \Omega$,  the shifted processes are given by
	\begin{align*}
		\myZ{T}\circ \varepsilon_{(\bx_1 \dots, \bx_n)}^{+,n}(\omega)&=\left( \int_{\bX_T}\bZeta^{i,k}(T-t,y) \ind{\theta \leq \bLamb_t^k} \bN(dk,dt,d\theta,dy)\right)(\omega + \sum_{j=1}^{n}\delta_{\bx_j})\\
		&=\left(\int_{\bX_T}\bZeta^{i,k}(T-t,y)  \ind{\theta \leq \bLamb_t^k(\omega + \sum_{j=1}^{n}\delta_{\bx_j})} \bN(dk,dt,d\theta,dy)\right)(\omega)\\
		&+\sum_{j=1}^{n}\bZeta^{i,k_j}(T-t_j,y_j) \ind{\theta_j \leq \bLamb_{t_j}^{k_j}(\omega + \sum_{j=1}^{n}\delta_{\bx_j})}.
	\end{align*}
Since $(\bx_1 \dots, \bx_n)\in (\bX_T)^n$ satisfy the compatibility condition,  Remark \ref{remark:pre_compatibility} implies 
\begin{align*}
	\myZ{T}\circ \varepsilon_{(\bx_1 \dots, \bx_n)}^{+,n}&=\sum_{j=1}^{n}\bZeta^{i,k_j}(T-t_j,y_j)+\int_{\bX_T}\bZeta^{i,k}(T-t,y)\ind{\theta \leq \bLamb_t^k\circ \varepsilon_{(\bx_1 \dots, \bx_n)}^{+,n}} \bN(dk,dt,d\theta,dy).
\end{align*}
Moreover, from Remark \ref{remark:lambda_MSPD}, $\bLamb - \bM$ can be seen as an MSPD which gives us, 
	\begin{align*}
		\bLamb_T^i\circ \varepsilon_{(\bx_1 \dots, \bx_n)}^{+,n}
		&= \bM^i(T)+ \sum_{j=1}^{n}\bP^{i,k_j}(T-t_j,y_j)  + \int_{\bX_T}\bP^{i,k}(T-t,y)  \ind{\theta \leq \bLamb_t^k\circ \varepsilon_{(\bx_1 \dots, \bx_n)}^{+,n}} \bN(dk,dt,d\theta,dy).
	\end{align*}
\end{remark}
From the results above, we can remark that a shifted process $\myZ{T}\circ \varepsilon_{(\bx_1 \dots, \bx_n)}^{+,n}$ can  be seen as  a MSPD-process whose baseline intensity is impacted by  $(\bx_1 \dots, \bx_n)$. This leads to the following definition.

\begin{definition}[Compensated shift of an MSPD]\label{def:compensated shift}
	Let $\boldsymbol{Z}$ a $(\bZ,\bM,\bP)-$MSPD and let $n \in \N^*$, $(\bx_1 \dots, \bx_n)\in (\bX_T)^n$ satisfy the compatibility condition (\ref{eq:compatibility}). Then its compensated shift $\boldsymbol{Z}\odot\varepsilon_{(\bx_1 \dots, \bx_n)}^{+,n}$ given by
	\begin{equation}
			\myZ{T}\odot\varepsilon_{(\bx_1 \dots, \bx_n)}^{+,n} := \myZ{T}\circ \varepsilon_{(\bx_1 \dots, \bx_n)}^{+,n} -  \sum_{j=1}^{n}\bZeta^{i,k_j}(T-t_j,y_j),
	\end{equation}
	is a $(\bP,\bM_{(\bx_1,\dots,\bx_n)},\bP)-$MSPD where $\bM_{(\bx_1,\dots,\bx_n)}$ is such that, 
	$$
	\bM^i_{(\bx_1,\dots,\bx_n)}(T) = \bM^i(T)+ \sum_{j=1}^{n}\bP^{i,k_j}(T-t_j,y_j).
	$$ 
	In other terms we have that the compensated shift of a $(\bZeta,\bM,\bP)-$MSPD noted $\boldsymbol{Z}_{T}\odot\varepsilon_{(\bx_1 \dots, \bx_n)}^{+,n}$, is a $(\bZeta,\bM_{(\bx_1,\dots,\bx_n)},\bP)-$MSPD.
\end{definition}

 Remark \ref{remark:lambda_MSPD} highlighted  that $\bLamb_T -\bM(T)$  is  a $(\bP,\bM,\bP)-$MSPD. This stays true in the case of the compensated shift, in fact we have
$$
\bLamb_T^i\odot\varepsilon_{(\bx_1 \dots, \bx_n)}^{+,n}:=\bLamb_T^i\circ \varepsilon_{(\bx_1 \dots, \bx_n)}^{+,n}-\bM^i(T)- \sum_{j=1}^{n}\bP^{i,k_j}(T-t_j,y_j)
$$
which is a $(\bP,\bM_{(\bx_1,\dots,\bx_n)},\bP)-$MSPD.

\section{Expectation and correlations}
This section illustrates how the previous results enable us to develop a new methodology for calculating quantities related to this MSPD process $\boldsymbol{Z}$  (in particular risk valuation). To start with, we develop the computations for the expectation and the correlations  of $\boldsymbol{Z}$. For the expectation, a well-known method consists of exploiting the fact that the expectation of the intensity satisfies a Volterra equation.
However, this method is not available for the calculation of higher-order moments. Therefore, for the computation of the covariance, we will combine the pseudo-chaotic expansion with Mecke's formula to obtain expressions involving the expectation of shifted processes that can be obtained as solution of a Volterra type equation.

\subsection{Expectation of the process $\boldsymbol{Z}$ and its shifted version}
We start with the following result that extends \cite[Theorem 2]{Bacryetal2013} and \cite[Theorem 2.4]{hillairet2023explicit}.
\begin{prop}\label{prop:expectation}
	Let $\boldsymbol{Z}$ a $(\bZ,\bM,\bP)-$MSPD. Let $p \in \N^*$, $(\bx_1 \dots, \bx_p)\in (\bX_T)^p$ satisfying the compatibility condition (\ref{eq:compatibility}). Then we have
	\begin{itemize}
		\item[(i)] Expectation of $\boldsymbol{Z}_T$ : $$\E[\boldsymbol{Z}_T] = \int_{0}^{T}\bZetaE(T-v) \left(\bM(v) + \int_{0}^{v}  \bPs(v-w)\bM(w) dw\right) dv . $$
		\item[(ii)] Expectation of $\boldsymbol{Z}_T$ shifted by $(\bx_1 \dots, \bx_p)$ :
		 \begin{align*}
		\E[\boldsymbol{Z}_T\circ \varepsilon_{(\bx_1,\dots,\bx_p)}^{+,p}]&=\E[\boldsymbol{Z}_T] +  \sum_{j=1}^{p}\bZeta^{.,k_j}(T-t_j,y_j) \\ &+\sum_{j=1}^{p}\int_{t_j}^{T}\bZetaE(T-v) \left(\bP^{.,k_j}(v-t_j,y_j) + \int_{t_j}^{v}  \bPs(v-w) \bP^{.,k_j}(w-t_j,y_j)dw\right) dv.
		 \end{align*}
	\end{itemize}
\end{prop}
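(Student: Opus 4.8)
The plan is to reduce both statements to a single elementary computation — the expectation of a linear functional of $\bN$ against a predictable nonnegative integrand — together with the resolution of a Volterra equation whose resolvent kernel is exactly $\bPs$.

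\textbf{Expectation of $\boldsymbol{Z}$.} First I would take expectations in \eqref{eq:MSPD}. Since $\bLamb$ is $\mathbb{F}^N$-predictable, the integrand $(k,t,\theta,y)\mapsto\bZeta^{i,k}(T-t,y)\ind{\theta\leq\bLamb^k_t}$ is predictable on $\bX_T$ (the value at $t=T$ is irrelevant, $\br$ being atomless), so $\int_{\bX_T}(\cdots)(\bN-\br)$ has zero mean — equivalently, Mecke's formula together with the predictability of $\bLamb$, which makes the added atom at time $t$ not affect $\bLamb_t$. The required integrability comes from the non-explosivity of $\bP$, which ensures $\E\big[\int_0^T\sum_i\bLamb^i_s\,ds\big]<\infty$ (Proposition on the intensity process) and hence that all integrals below are finite. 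Computing the $\br$-integral and using Tonelli to move $\E$ inside ($\int_{\R_+}\mathbb{P}(\theta\leq\bLamb^k_t)\,d\theta=\E[\bLamb^k_t]$) gives, in the $i$-th component,
\[
\E[\myZ{T}]=\sum_{k=1}^d\int_0^T\Big(\int_{\R_+}\bZeta^{i,k}(T-t,y)\,\bnu^k(dy)\Big)\E[\bLamb^k_t]\,dt=\int_0^T\bZetaE(T-t)\,\E[\bLamb_t]\,dt ,
\]
by \eqref{equation:PhibE}. The identical argument applied to the second line of \eqref{eq:MSPD} yields the linear Volterra equation $\E[\bLamb_T]=\bM(T)+\int_0^T\bPE(T-t)\,\E[\bLamb_t]\,dt$.

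\textbf{Solving the Volterra equation.} I would then verify that $g(T):=\bM(T)+\int_0^T\bPs(T-t)\bM(t)\,dt$ solves it, which is precisely the claimed formula for $\E[\bLamb_T]$. Substituting $g$ into the right-hand side, exchanging the order of integration (Tonelli) and applying \eqref{eq:magic_convol} in the form $\int_0^s\bPE(s-u)\bPs(u)\,du=\bPs(s)-\bPE(s)$ — which follows from \eqref{eq:magic_convol} because the iterated convolutions $\bPE_n$ of a single kernel commute, so $\bPE\ast\bPs=\bPs\ast\bPE=\bPs-\bPE$ — gives $\int_0^T\bPE(T-t)g(t)\,dt=\int_0^T\bPs(T-s)\bM(s)\,ds$, whence $\bM(T)+\int_0^T\bPE(T-t)g(t)\,dt=g(T)$. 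Uniqueness of the $L^1([0,T])$ solution (the kernel $\bPE$ is integrable by Definition~\ref{assumption:Phi}, so a Picard/Gronwall argument applies) then gives $\E[\bLamb_T]=g(T)$; plugging this into the previous display proves (i). The only point requiring care here is that matrix convolution is noncommutative, so $\bM$ must stay on the correct side throughout.

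\textbf{The shifted process.} For (ii) I would invoke Definition~\ref{def:compensated shift} rather than recompute anything: since $(\bx_1,\dots,\bx_p)$ satisfies the compatibility condition \eqref{eq:compatibility},
\[
\myZ{T}\circ\varepsilon_{(\bx_1,\dots,\bx_p)}^{+,p}=\sum_{j=1}^p\bZeta^{i,k_j}(T-t_j,y_j)+\myZ{T}\odot\varepsilon_{(\bx_1,\dots,\bx_p)}^{+,p},
\]
where the compensated shift is a $(\bZeta,\bM_{(\bx_1,\dots,\bx_p)},\bP)$-MSPD with $\bM^i_{(\bx_1,\dots,\bx_p)}(T)=\bM^i(T)+\sum_{j=1}^p\bP^{i,k_j}(T-t_j,y_j)$. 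Applying (i) to this MSPD — legitimate since $\bM_{(\bx_1,\dots,\bx_p)}$ is again a nonnegative baseline and $\bP$ is unchanged, so the hypotheses of Definition~\ref{def:MSPD} and Definition~\ref{assumption:Phi} still hold — and expanding by linearity in the baseline, each of the two integrals in (i) splits into an $\bM$-part, which reassembles $\E[\boldsymbol{Z}_T]$, and $p$ perturbation parts; the convention $\bP(\tau,\cdot)=0$ for $\tau<0$ then replaces $\int_0^{\,\cdot}$ by $\int_{t_j}^{\,\cdot}$ and the inner $\int_0^v$ by $\int_{t_j}^v$, producing exactly the stated expression.

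\textbf{Main obstacle.} The routine parts are the Tonelli exchanges (all integrands are nonnegative) and the algebra with $\bP(\tau,\cdot)=0$. The genuinely delicate step is the first one: justifying the replacement of $\bN$ by its compensator $\br$ despite $\bLamb$ being unbounded — this rests entirely on $\E[\int_0^T\sum_i\bLamb^i_s\,ds]<\infty$ — and then carrying out the resolvent manipulation of Step~2 with the correct, noncommutative, order of matrix convolutions; a side or sign slip in the use of \eqref{eq:magic_convol} is where the argument is most fragile.
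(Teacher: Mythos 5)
Your proposal is correct and follows essentially the same route as the paper: take expectations to reduce (i) to the linear Volterra equation $\E[\bLamb_T]=\bM(T)+\int_0^T\bPE(T-t)\E[\bLamb_t]\,dt$ solved by the resolvent $\bPs$, then obtain (ii) from the compensated-shift Definition~\ref{def:compensated shift} and linearity of (i) in the baseline intensity. The extra care you take (verifying the resolvent identity in the reversed convolution order via commutativity of the iterated convolutions, and the integrability coming from non-explosivity) is sound and merely fills in details the paper leaves implicit.
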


\begin{proof}
	For (i), by taking the expectation of the process $\myZ{T}$ we have that, 
	$$
	\E[\myZ{T}] = \sum_{k=1}^d\int_{\bX_T} \bZetaE^{i,k}(T-t) \E[\bLamb_t^k] dt.
	$$
	Hence in order to compute $\E[\myZ{T}]$ we first need $\E[\bLamb_t^k]$, for $k=1,\cdots,d$.
	It satisfies the linear Volterra ODE 
	which reads in a matrix form as 
	$$
	\E[\bLamb_T] = \bM(T) + \int_{0}^T \bPE(T-t) \E[\bLamb_t] dt,
	$$
	whose solution is given by, 
	$$
	\E[\bLamb_T] = \bM(T)+ \int_{0}^{T} \bPs(T-w)\bM(w) dw.
	$$
	Thus,
	\begin{align*}
		\E[\boldsymbol{Z}_{T}] &= \int_{0}^T \bZetaE(T-t) \E[\bLamb_t] dt\\
		&=\int_{0}^T \bZetaE(T-t)\left(\bM(t) + \int_{0}^{t} \bPs(t-w)\bM(w)dw \right)dt.
	\end{align*}
	For (ii), we use Definition \ref{def:compensated shift} to deduce
	\begin{align*}
		\hspace{-2cm}\E[\boldsymbol{Z}_T\odot \varepsilon_{(\bx_1,\dots,\bx_p)}^{+,p}] &=\int_{0}^T \bZetaE(T-t)\left(\bM(t)+ \sum_{j=1}^{p}\bP^{.,k_j}(t-t_j,y_j)\right)dt \\
		& + \int_{0}^T \bZetaE(T-t)\left(\int_{0}^{t} \bPs(t-w)\bM(w)dw  + \int_{0}^{t} \bPs(t-w)\sum_{j=1}^{p}\bP^{.,k_j}(w-t_j,y_j)dw\right)dt\\
		&=\E[\boldsymbol{Z}_T]\\
		&+\int_{0}^T \bZetaE(T-t)\left( \sum_{j=1}^{p}\bP^{.,k_j}(t-t_j,y_j)  + \int_{0}^{t} \bPs(t-w)\sum_{j=1}^{p}\bP^{.,k_j}(w-t_j,y_j)dw\right)dt.
	\end{align*}
		
\end{proof}
\noindent In (ii) of Proposition \ref{prop:expectation}, the expectation $\E[\myZ{T}\odot \varepsilon_{(\bx_1,\dots,\bx_p)}^{+,p}]$ of a $(\bZ,\bM_{(\bx_1,\dots,\bx_p)},\bP)-$MSPD is expressed as the sum of the expectations of a $(\bZ,\bM,\bP)-$MSPD and a $(\bZ,\bM_{(\bx_1,\dots,\bx_p)} - \bM,\bP)-$MSPD. This indicates that the expectation of the processes exhibits some linearity  property with respect to the baseline intensity. This observation is made formal in the following remark.
\begin{remark}[Expectation's linearity with respect to the baseline intensity] \mbox{ }\\
		Let $\boldsymbol{Z}$ a $(\bZ,\bM,\bP)-$MSPD. Let $p \in \N^*$, and consider a family of \textit{d-}dimensional vectors $\left(\boldsymbol{\Lambda}_j\right)_{1\leq j\leq p}$ such that $\forall j \in[\![1,p]\!],\forall i \in[\![1,d]\!] \;\boldsymbol{\Lambda}_j^i:\R_+\to\R_+ $. Consider the baseline intensity given by
	$$\tilde\bM(t) = \bM(t)+ \sum_{j=1}^{p}\bold{\Lambda}_j(t),$$
	and the associated processes such that $\boldsymbol{\tilde Z}$ is a $(\bZ,\tilde \bM,\bP)-$MSPD and 
	$\boldsymbol{ \tilde \Z}$ is a $(\bZ,\sum_{j=1}^{p}\bold{\Lambda}_j(T),\bP)-$MSPD.
	We have that
	\begin{align*}
		\E[\boldsymbol{\tilde Z}_T]&=\int_{0}^{T}\bZetaE(T-v) \left(\tilde\bM(v) + \int_{0}^{v}  \bPs(v-w)\tilde\bM(w) dw\right) dv =\E[\boldsymbol{Z}_T] + \E[\boldsymbol{\tilde \Z}_T]. 
	\end{align*}
\end{remark}
\begin{remark}
Proposition \ref{prop:expectation} provides as a by product a generalization of the Wald identity which in dimension 1 reduces to  \vspace{-4mm}
$$ \E[L_T] = \E[N_T] \E[Y_1]; \quad  \text{ for } L_T:=\sum_{j=1}^{N_T} Y_j    \vspace*{-3mm}$$
where  $N$ a counting process independent from the iid random variables $(Y_j)_{j\geq 1}$. In our setting, for $1\leq i\leq d$, we prove that 
\vspace{-2mm}
 $$\E[\boldsymbol{L}^i_T] = \E[Y^i]\E[\boldsymbol{H}^i_T]   \quad  \text{ for }
  L^i_T:=\sum_{j=1}^{H_T^i} Y^i_j  \vspace*{-3mm}$$ 
 where $(Y^i_j)_{j\geq 1}$ are iid random variables with probability density $\bnu^i$, 
 but in which $\E[\boldsymbol{H}^i_T]$ is impacted by the distributions of the claims sizes $(Y^1, \cdots, Y^d)$.
\end{remark}
For computing the expectation of the intensity, we apply Proposition \ref{prop:expectation} to $(\bLamb - \bM)$ which  can be considered as a $(\bP,\bM,\bP)-$MSPD 
(see Remark \ref{remark:lambda_MSPD}).
Since the \textit{d-}kernel $\bZeta$ coincides with the self-excitation \textit{d-}kernel $\bP$, the expression for the expectation simplifies using (\ref{eq:magic_convol}).
\begin{corollary}[Intensity expectation]
	The expectation of the intensity $\bLamb^i_T$ (see (\ref{eq:MSPD}))  is given by
	$$
	\E[\bLamb_T] = \bM(T)+ \int_{0}^{T} \bPs(T-w)\bM(w) dw
	$$
	and, for $p \in \N^*$, $(\bx_1 \dots, \bx_p)\in (\bX_T)^p$,  its shifted version is given by
	\begin{align*}
		\E[\bLamb^i_T\circ \varepsilon_{(\bx_1,\dots,\bx_p)}^{+,p}]&=\E[\bLamb_T^i] + \sum_{j=1}^{p}\left(\bP^{i,k_j}(T-t_j,y_j)+ \int_{t_j}^{T} \bPs^{i,.}(T-w)\bP^{.,k_j}(w-t_j,y_j)dw\right).
	\end{align*}
\end{corollary}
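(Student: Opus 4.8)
The plan is to derive both identities from Proposition~\ref{prop:expectation} via the observation, recorded in Remark~\ref{remark:lambda_MSPD}, that $\bLamb-\bM$ is a $(\bP,\bM,\bP)$-MSPD: it is an MSPD whose $d$-kernel $\bZeta$ coincides with its self-excitation kernel $\bP$, so that in the formulas of Proposition~\ref{prop:expectation} one replaces $\bZeta$ by $\bP$ and hence $\bZetaE$ by $\bPE$. The announced simplifications are then produced by the convolution identity \eqref{eq:magic_convol}.

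For (i): Proposition~\ref{prop:expectation}(i) applied to $\bLamb-\bM$ reads
$$\E[\bLamb_T]-\bM(T)=\int_0^T \bPE(T-v)\Big(\bM(v)+\int_0^v \bPs(v-w)\bM(w)\,dw\Big)dv.$$
I would split the right-hand side into two terms, use Fubini on the double integral so as to integrate in $v$ first, and observe that the change of variables $v\mapsto v-w$ turns the inner integral into the matrix convolution $(\bPE\ast\bPs)(T-w)$. Since $\bPs=\sum_{n\ge 1}\bPE_n$ with $\bPE_n$ the $n$-fold convolution of $\bPE$, one has $\bPE\ast\bPs=\sum_{n\ge 1}\bPE_{n+1}=\bPs-\bPE$, which is the companion of \eqref{eq:magic_convol}. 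Plugging this back in, the two copies of $\int_0^T \bPE(T-w)\bM(w)\,dw$ cancel and only $\int_0^T \bPs(T-w)\bM(w)\,dw$ survives; adding $\bM(T)$ gives~(i).

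For (ii): I would apply Proposition~\ref{prop:expectation}(ii) to the same MSPD $\bLamb-\bM$ (under the compatibility condition \eqref{eq:compatibility} on $(\bx_1,\dots,\bx_p)$ required there). Since $\bM$ is deterministic it is untouched by the add-point operator, so $(\bLamb_T-\bM(T))\circ\varepsilon_{(\bx_1,\dots,\bx_p)}^{+,p}=\bLamb_T\circ\varepsilon_{(\bx_1,\dots,\bx_p)}^{+,p}-\bM(T)$, and likewise on the unshifted side, so that the two $\bM(T)$ cancel. What remains is
$$\sum_{j=1}^p \bP^{.,k_j}(T-t_j,y_j)+\sum_{j=1}^p\int_{t_j}^T \bPE(T-v)\Big(\bP^{.,k_j}(v-t_j,y_j)+\int_{t_j}^v \bPs(v-w)\bP^{.,k_j}(w-t_j,y_j)\,dw\Big)dv,$$
and for each fixed $j$ the inner double integral is simplified by the same manipulation as in~(i) (Fubini followed by \eqref{eq:magic_convol}), now with $w\mapsto\bP^{.,k_j}(w-t_j,y_j)$ on $[t_j,T]$ playing the role of $\bM$ on $[0,T]$; it collapses to $\int_{t_j}^T \bPs(T-w)\bP^{.,k_j}(w-t_j,y_j)\,dw$. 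Reading off the $i$-th component yields~(ii).

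The arguments are routine Fubini-plus-convolution bookkeeping; the main point deserving care is that matrix convolution is non-commutative, so one must check $\bPE\ast\bPs=\bPs\ast\bPE$ (both equal $\bPs-\bPE$, by the series $\bPs=\sum_n\bPE_n$) in order to invoke \eqref{eq:magic_convol}, which is written in the order $\int_0^t \bPs(t-s)\bPE(s)\,ds$ whereas the change of variables yields the opposite order. It is also worth recording that, because $\bP(\tau,\cdot)=0$ for $\tau<0$ and $\bP^{i,k}(0,y)=0$, the integrals in the shifted terms genuinely start at $t_j$ and the term $\bP^{i,k_j}(0,y_j)$ contributes nothing, consistently with the predictability of $\bLamb$.
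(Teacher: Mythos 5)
Your proposal is correct and follows exactly the route the paper indicates (and leaves unproved): apply Proposition \ref{prop:expectation} to the $(\bP,\bM,\bP)$-MSPD $\bLamb-\bM$ and collapse the resulting double integrals via \eqref{eq:magic_convol}. Your explicit check that $\bPE\ast\bPs=\bPs\ast\bPE=\bPs-\bPE$ despite the non-commutativity of matrix convolution, and your remark that the compatibility condition is implicitly needed in (ii), are the only details the paper glosses over, and both are handled correctly.
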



\subsection{A general correlation formula}
As presented at the beginning of this section, our methodology allows one to compute more general functionals of the process $\boldsymbol{Z}$ which would not be possible by using existing methodologies (as the one presented in \cite{Bacryetal2013} with Volterra equations or in \cite{jaisson2014limit_appendix} with the moment measures). In particular one would like to compute the expectation of building blocks having the form of a product $\boldsymbol{Z}\boldsymbol{\Gamma}$ where $\boldsymbol{Z}$ is an $(\bZeta,\bM,\bP)-$MSPD and $\boldsymbol{\Gamma}:= \left(\boldsymbol{\Gamma}^\ell\right)_{1\leq \ell \leq d} \in L^2(\Omega)$ and whose expectation of the shifted version can be written as  
$$\E\left[ \boldsymbol{\Gamma}^\ell\circ\varepsilon_{(\bx_1,\dots,\bx_p)}^{+,p}\right] =\E\left[\boldsymbol{\Gamma}^\ell\right] +  \sum_{j=1}^p \boldsymbol{\rho}_\ell(k_j,t_j,y_j),$$
with  $\boldsymbol{\rho}:= \left(\boldsymbol{\rho}_\ell\right)_{1\leq \ell \leq d}\in \mathcal M_{d,1}^+$, $\boldsymbol{\rho}_\ell:\{1,\dots,d\}\times\R_+^2 \to \R_+$ and such that 
\begin{equation}
\label{eq:integrkernelstill}
\int_{\R_+}\bZeta^{i,k}(T-t,y)\boldsymbol{\rho}_\ell(k,t,y)\bnu^{k}(dy)<+\infty, \; \forall (i,k,t).
\end{equation}
We define  for $T >0$,  $(\bZeta \boldsymbol{\rho}_\ell)^T: \R_+ \times\R_+\to\mathcal M_{d,d}^+$ and $\overline{(\bZeta \boldsymbol{\rho}_\ell)^T}: \R^+ \to \mathcal M_{d,d}^+ $  as
$$
(\bZeta \boldsymbol{\rho}_\ell)^T(t,y):= \left(\bZeta^{i,k}(T-t,y)\boldsymbol{\rho}_\ell(k,t,y)\right)_{1\leq i,k \leq d},
$$
$$
\overline{(\bZeta \boldsymbol{\rho}_\ell)^T}(t) := \left(\int_{\R_+}\bZeta^{i,k}(T-t,y)\boldsymbol{\rho}_\ell(k,t,y)\bnu^{k}(dy)\right)_{1\leq i,k \leq d}.
$$
Moreover, since $(\bZeta \boldsymbol{\rho}_\ell)^T$ is a \textit{d-}kernel (due to the integrability Condition (\ref{eq:integrkernelstill})) and  $\boldsymbol{Z}^{(\boldsymbol{\bZeta\rho}_\ell)^T}$ is a $((\bZeta\boldsymbol{\rho}_\ell)^T,\bM,\bP)-$MSPD, its expectation is given by Proposition \ref{prop:expectation} as
\begin{align*}
	\E[\boldsymbol{Z}^{(\boldsymbol{\bZeta\rho}_\ell)^T}_T]= \int_{0}^{T}\overline{(\bZeta\boldsymbol{\rho}_\ell)^T}(u)\left(\int_{0}^{u}\bPs(u-v)\bM(v)dv + \bM(u) \right)du.
\end{align*}
\textbf{Convention:}
To lighten the notations, we write $\bZeta \boldsymbol{\rho}_\ell:=(\bZeta \boldsymbol{\rho}_\ell)^T$ when there is no ambiguity in the context.\\
In the result below, $T>0$ is a fixed horizon.

\begin{theorem}\label{prop:expGamma}\label{th:gamma}
	Let $\boldsymbol{Z}$ a $(\bZeta,\bM,\bP)-$MSPD and $(\bx_1,\dots,\bx_p)\in (\bX_T)^p$ satisfying the compatibility condition (\ref{eq:compatibility}). Let $\boldsymbol{\Gamma} \in \mathcal M_{d,1}^+$ such that for $1\leq \ell \leq d$, $\boldsymbol{\Gamma}^\ell\in L^2(\Omega)$ and
	\begin{equation}\label{relationgamma}
		\E\left[ \boldsymbol{\Gamma}^\ell\circ\varepsilon_{(\bx_1,\dots,\bx_p)}^{+,p}\right] =\E\left[\boldsymbol{\Gamma}^\ell\right] +  \sum_{j=1}^p \boldsymbol{\rho}_\ell(k_j,t_j,y_j),
	\end{equation}
	where $\boldsymbol{\rho}_\ell:\{1,\dots,d\}\times\R_+^2 \to \R_+$ is a deterministic function (specific to $\boldsymbol{\Gamma}^\ell$) satisfying (\ref{eq:integrkernelstill}).\\
	Then for $1\leq i,\ell \leq d$ we have
	$$\E\left[\myZ{T}\boldsymbol{\Gamma}^\ell\right] =\E\left[\myZ{T}\right] \E\left[\boldsymbol{\Gamma}^\ell\right] + \E\left[(\boldsymbol{Z}^{\boldsymbol{\bZeta\rho}_\ell}_T)^i\right] + \int_{0}^T\bZetaE^{i,.}(T-v)\left(\int_{0}^{v}\bPs(v-w)\E\left[\boldsymbol{Z}^{\boldsymbol{\bP\rho_\ell}\hspace{0.05cm}}_w\right]dw + \E\left[\boldsymbol{Z}^{\boldsymbol{\bP\rho_\ell}\hspace{0.05cm}}_v\right] \right)dv,$$
	where $\boldsymbol{Z^{\bZeta \rho_\ell}}$ is a $(\bZeta \boldsymbol{\rho}_\ell,\bM,\bP)-$MSPD and $\boldsymbol{Z^{\bP\rho_\ell}}$ is a $(\bP\boldsymbol{\rho}_\ell,\bM,\bP)-$MSPD.
\end{theorem}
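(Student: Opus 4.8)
The plan is to combine the pseudo-chaotic expansion of Theorem~\ref{th:pseudochaoticcounting} with Mecke's formula~\eqref{eq:Mecke_1} in order to reduce $\E[\myZ{T}\boldsymbol{\Gamma}^\ell]$ to expectations of shifted processes, which are then identified with expectations of the auxiliary MSPDs $\boldsymbol{Z^{\bZeta\rho_\ell}}$ and $\boldsymbol{Z^{\bP\rho_\ell}}$ via Proposition~\ref{prop:expectation}. First I would write, using \eqref{eq:pseudo_chao} applied to $\myZ{T}$,
$$\E\left[\myZ{T}\boldsymbol{\Gamma}^\ell\right] = \lim_{M\to\infty}\sum_{n=1}^{+\infty}\frac{1}{n!}\E\left[\boldsymbol{\Gamma}^\ell\,\bpI_n\left(\T^n_{(\bx_1,\cdots,\bx_n)}\myZ{T}\ind{([0,T]\times[0,M])^n}\right)\right],$$
and apply the Mecke formula~\eqref{eq:Mecke_1} to each term, turning $\E[\boldsymbol{\Gamma}^\ell\,\bpI_n(f_n)]$ into $\int_{\bX_T^n} f_n(\bx_1,\dots,\bx_n)\,\E[\boldsymbol{\Gamma}^\ell\circ\varepsilon_{\bx_1,\dots,\bx_n}^{+,n}]\,\br^{\otimes n}(d\bx)$, with $f_n=\T^n_{(\bx_1,\cdots,\bx_n)}\myZ{T}$. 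At this point I would substitute the hypothesis~\eqref{relationgamma}, namely $\E[\boldsymbol{\Gamma}^\ell\circ\varepsilon_{\bx_1,\dots,\bx_n}^{+,n}] = \E[\boldsymbol{\Gamma}^\ell] + \sum_{j=1}^n\boldsymbol{\rho}_\ell(k_j,t_j,y_j)$, which splits the sum into two pieces.

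\textbf{Handling the two pieces.}
The first piece, with factor $\E[\boldsymbol{\Gamma}^\ell]$, recombines (by linearity and the pseudo-chaotic expansion run backwards, i.e. Theorem~\ref{th:pseudochaoticcounting} again plus Mecke with $F=1$, cf.~\eqref{eq:Mecke_2}) into $\E[\myZ{T}]\,\E[\boldsymbol{\Gamma}^\ell]$. For the second piece, using that $\T^n_{(\bx_1,\cdots,\bx_n)}\myZ{T}$ is symmetric in its $n$ arguments, each of the $n$ summands $\boldsymbol{\rho}_\ell(k_j,t_j,y_j)$ contributes equally, so the $\frac{1}{n!}$ and the sum over $j$ combine to leave $\frac{1}{(n-1)!}$ times a single integral where the distinguished variable $\bx_j$ carries the extra weight $\boldsymbol{\rho}_\ell$. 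Here I would use the explicit form~\eqref{eq:ckcounting}, $\T^n_{(\bx_1,\ldots,\bx_n)}\myZ{T} = \bZeta^{i,k_n}(T-t_n,y_n)\T^{n-1}_{(\bx_1,\dots,\bx_{n-1})}\ind{\theta_{n}\leq \bLamb^{k_n}_{t_n}}$, and the compatibility/symmetry structure to re-sum the resulting series. The distinguished point $\bx_j$ can be either the ``last'' point (the one carrying $\bZeta^{i,k_n}$, producing the $\E[(\boldsymbol{Z}^{\boldsymbol{\bZeta\rho}_\ell}_T)^i]$ term) or an ``intermediate'' point inside the intensity factor $\ind{\theta_n\leq\bLamb^{k_n}_{t_n}}$; the latter, after using Remark~\ref{remark:Tnlambda} that $\T^n\bLamb$ has the same shape with $\bP$ replacing $\bZeta$, and integrating against the intensity structure exactly as in the proof of Proposition~\ref{prop:expectation}, yields the convolution term $\int_0^T\bZetaE^{i,.}(T-v)\big(\int_0^v\bPs(v-w)\E[\boldsymbol{Z}^{\boldsymbol{\bP\rho_\ell}}_w]dw + \E[\boldsymbol{Z}^{\boldsymbol{\bP\rho_\ell}}_v]\big)dv$.

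\textbf{Main obstacle.}
The delicate point is the bookkeeping in the ``intermediate point'' case: one must track, for a shift point $\bx_j$ landing strictly before $t_n$, how it enters the indicator $\ind{\theta_n\leq\bLamb^{k_n}_{t_n}}$ through the intensity, re-sum the pseudo-chaotic series for $\bLamb$ with that point frozen (which by Remark~\ref{remark:lambda_MSPD} and Definition~\ref{def:compensated shift} produces an MSPD with shifted baseline, hence a $\boldsymbol{\bP\rho_\ell}$-type kernel), and then feed the result back through the outer $\bZeta^{i,k_n}$ layer and the Volterra resolvent $\bPs$ from \eqref{eq:magic_convol}. Justifying the interchange of $\lim_{M\to\infty}$, the infinite sum over $n$, and the expectation requires the integrability afforded by the non-explosivity of the kernels (spectral radius $\mathcal{R}(\boldsymbol{K})<1$), the $L^2$ assumption on $\boldsymbol{\Gamma}^\ell$, and the integrability condition~\eqref{eq:integrkernelstill} which guarantees $(\bZeta\boldsymbol{\rho}_\ell)^T$ is again a $d$-kernel so that Proposition~\ref{prop:expectation} applies to $\boldsymbol{Z}^{\boldsymbol{\bZeta\rho}_\ell}$ and $\boldsymbol{Z}^{\boldsymbol{\bP\rho}_\ell}$; I would invoke the convergence results of~\cite{Caroline_Anthony_chaotic} for the pseudo-chaotic expansion together with dominated convergence to close this gap.
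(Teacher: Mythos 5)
Your proposal is correct and follows essentially the same route as the paper: pseudo-chaotic expansion of $\myZ{T}$, Mecke's formula to produce $\E[\boldsymbol{\Gamma}^\ell\circ\varepsilon^{+,n}]$, substitution of \eqref{relationgamma}, and then the resummation of the $\boldsymbol{\rho}_\ell$-weighted series into $\E[(\boldsymbol{Z}^{\boldsymbol{\bZeta\rho}_\ell}_T)^i]$ plus the Volterra-convolution term — a computation the paper isolates as Lemma \ref{lemma:magic} and carries out in the appendix by successive integration in $\theta$, $y$, $k$, $t$ and summation in $n$, exactly along the ``last point vs.\ intermediate point'' dichotomy you describe. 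The only cosmetic difference is that you first use symmetry to reduce $\frac{1}{n!}\sum_j$ to $\frac{1}{(n-1)!}$ with one distinguished variable, whereas the paper keeps the sum over $j$ on the time-ordered simplex; the two bookkeepings are equivalent.
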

\noindent The demonstration  of Theorem \ref{prop:expGamma} relies on the following lemma, whose proof is postponed in the appendix.
\begin{lemma}\label{lemma:magic}
	Let $\boldsymbol{Z}$ a $(\bZeta,\bM,\bP)-$MSPD, and $\boldsymbol{\rho}_\ell:\{1,\dots,d\}\times\R_+^2 \to \R_+$ such that $\int_{\R_+}\boldsymbol{\rho}_\ell(k,t,y)\nu^k(dy)<+\infty$, then  for $1\leq i \leq d$
	\begin{align*}
		&\sum_{n=1}^{+\infty} \frac{1}{n!} \int_{\bX_T^n}\left(\T^n_{(\bx_1,\dots,\bx_n)} \myZ{T}\right)\;\sum_{j=1}^n\boldsymbol{\rho}_\ell{(k_j,t_j,y_j)}\; \br^{\otimes n}(d\bx_1,\ldots,d\bx_n) \\
		&=\E\left[(\boldsymbol{Z}^{\boldsymbol{\bZeta\rho}_\ell}_T)^i\right] +\int_{0}^T\bZetaE^{i,.}(T-v)\int_{0}^{v}\left(\bPs(v-w)\E[\boldsymbol{Z}^{\boldsymbol{\bP\rho_\ell}}_{w}] + \E[\boldsymbol{Z}^{\boldsymbol{\bP\rho_\ell}}_{v}] \right)dwdv. 
	\end{align*}
	Moreover if $\sum \boldsymbol{\rho}_\ell \equiv 1$, then
	\begin{align*}
		&\sum_{n=1}^{+\infty} \frac{1}{n!} \int_{\bX_T^n} \T^n_{(\bx_1,\dots,\bx_n)} \myZ{T} \; \br^{\otimes n}(d\bx_1,\ldots,d\bx_n) = \E\left[\boldsymbol{Z}^i_T\right].
	\end{align*} 
\end{lemma}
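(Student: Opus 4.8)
The plan is to evaluate the series on the left-hand side explicitly, reduce each summand to an integral of products of the kernels $\bZeta$, $\bP$ and of the baseline $\bM$, and then re-sum the resulting time-ordered convolutions by means of Proposition \ref{prop:expectation}(i).

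First I would use symmetry. Since $(\bx_1,\dots,\bx_n)\mapsto\T^n_{(\bx_1,\dots,\bx_n)}\myZ{T}$ is symmetric and integrable (Definition \ref{definition:patwisederivative}), since $\br^{\otimes n}$ is a symmetric and atomless measure, and since $\sum_{j=1}^n\boldsymbol{\rho}_\ell(k_j,t_j,y_j)$ is symmetric, each integral over $\bX_T^n$ may be restricted to the ordered domain $\{0<t_1<\dots<t_n<T\}$ at the cost of a factor $n!$, which cancels the prefactor $\tfrac1{n!}$. On that domain I then integrate out $\theta_n,\theta_{n-1},\dots,\theta_1$ in turn, using $\int_0^{+\infty}\ind{\theta_m\le\bLamb^{k_m}_{t_m}}\,d\theta_m=\bLamb^{k_m}_{t_m}$, then \eqref{eq:ckcounting} combined with Remark \ref{remark:Tnlambda} applied with shifted indices, that is $\T^{m-1}_{(\bx_1,\dots,\bx_{m-1})}\bLamb^{k_m}_{t_m}=\bP^{k_m,k_{m-1}}(t_m-t_{m-1},y_{m-1})\,\T^{m-2}_{(\bx_1,\dots,\bx_{m-2})}\ind{\theta_{m-1}\le\bLamb^{k_{m-1}}_{t_{m-1}}}$, and finally $\T^0\ind{\theta_1\le\bLamb^{k_1}_{t_1}}=\ind{\theta_1\le\bM^{k_1}(t_1)}$. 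By induction this yields, on $\{t_1<\dots<t_n\}$,
\[\int_{\R_+^n}\T^n_{(\bx_1,\dots,\bx_n)}\myZ{T}\,d\theta_1\cdots d\theta_n=\bZeta^{i,k_n}(T-t_n,y_n)\Big(\prod_{m=2}^{n}\bP^{k_m,k_{m-1}}(t_m-t_{m-1},y_{m-1})\Big)\bM^{k_1}(t_1),\]
so that the left-hand side becomes the sum over $n\ge1$ and over $k_1,\dots,k_n$ of $\int_{\{0<t_1<\dots<t_n<T\}}$ of this product times $\sum_{j=1}^n\boldsymbol{\rho}_\ell(k_j,t_j,y_j)$, integrated against $\prod_{m}\bnu^{k_m}(dy_m)\,dt_1\cdots dt_n$.

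I would then split $\sum_{j=1}^n\boldsymbol{\rho}_\ell(k_j,t_j,y_j)=\boldsymbol{\rho}_\ell(k_n,t_n,y_n)+\sum_{j=1}^{n-1}\boldsymbol{\rho}_\ell(k_j,t_j,y_j)$ and treat the two pieces separately. For $j=n$, performing the $y$-integrations replaces $\bZeta^{i,k_n}(T-t_n,y_n)\boldsymbol{\rho}_\ell(k_n,t_n,y_n)$ by $\overline{(\bZeta\boldsymbol{\rho}_\ell)^T}^{\,i,k_n}(t_n)$ and each $\bP^{k_m,k_{m-1}}(t_m-t_{m-1},y_{m-1})$ by $\bPE^{k_m,k_{m-1}}(t_m-t_{m-1})$; summing the ensuing time-ordered convolution integrals over $n$ and $k_1,\dots,k_n$ rebuilds below $t_n$ the resolvent expansion $\bM(v)+\int_0^v\bPs(v-w)\bM(w)\,dw=\E[\bLamb_v]$ (cf. the proof of Proposition \ref{prop:expectation} and Definition \ref{prop:Phi_convole}), giving $\int_0^T\overline{(\bZeta\boldsymbol{\rho}_\ell)^T}^{\,i,.}(v)\big(\bM(v)+\int_0^v\bPs(v-w)\bM(w)\,dw\big)dv=\E[(\boldsymbol{Z}^{\boldsymbol{\bZeta\rho}_\ell}_T)^i]$ by Proposition \ref{prop:expectation}(i). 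For the pieces $j\le n-1$, the mark $y_j$ now enters both $\bP^{k_{j+1},k_j}(t_{j+1}-t_j,y_j)$ and $\boldsymbol{\rho}_\ell(k_j,t_j,y_j)$, so that one step of the cascade becomes a $\bP\boldsymbol{\rho}_\ell$-step; re-summing the sub-cascade on $\{t_1<\dots<t_j\}$ (rooted at $\bM$) together with this step, through Proposition \ref{prop:expectation}(i), produces $\E[\boldsymbol{Z}^{\boldsymbol{\bP\rho_\ell}}_{t_{j+1}}]$ at its target $t_{j+1}$, while re-summing the sub-cascade on $\{t_{j+1}<\dots<t_n\}$ (terminated by $\bZetaE^{i,k_n}(T-t_n)$), using $\sum_{q\ge1}\bPE_q=\bPs$, contributes the factor $\bZetaE^{i,.}(T-t_{j+1})+\int_{t_{j+1}}^T\bZetaE^{i,.}(T-w)\bPs(w-t_{j+1})\,dw$ acting on the preceding quantity; collecting these over the designated step $t_{j+1}\in(0,T)$ and exchanging the order of integration yields exactly $\int_0^T\bZetaE^{i,.}(T-v)\big(\int_0^v\bPs(v-w)\E[\boldsymbol{Z}^{\boldsymbol{\bP\rho_\ell}}_w]\,dw+\E[\boldsymbol{Z}^{\boldsymbol{\bP\rho_\ell}}_v]\big)dv$, which is the announced identity.

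For the last assertion (no weight), I would take expectations in the pseudo-chaotic representation \eqref{eq:pseudo_chao} and apply Mecke's formula \eqref{eq:Mecke_2} term by term; the truncation $\ind{([0,T]\times[0,M])^n}$ disappears as $M\to+\infty$ by dominated convergence, since by the compatibility condition \eqref{eq:compatibility} the map $\T^n_{(\bx_1,\dots,\bx_n)}\myZ{T}$ vanishes outside the set where each $\theta_m\le\bM^{k_m}(t_m)+\sum_{p<m}\bP^{k_m,k_p}(t_m-t_p,y_p)$, whence $\ind{([0,T]\times[0,M])^n}\uparrow1$ on its support while $|\T^n_{(\bx_1,\dots,\bx_n)}\myZ{T}\,\ind{([0,T]\times[0,M])^n}|\le|\T^n_{(\bx_1,\dots,\bx_n)}\myZ{T}|$ is integrable; the interchanges with $\sum_n$ and with the $\theta$-integrations rest on the absolute convergence of the expansion established in \cite{Caroline_Anthony_chaotic} and on the integrability hypothesis $\int_{\R_+}\boldsymbol{\rho}_\ell(k,t,y)\bnu^k(dy)<+\infty$. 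The step I expect to be the main obstacle is the second one: the telescoping identity for $\int_{\R_+^n}\T^n_{(\bx_1,\dots,\bx_n)}\myZ{T}\,d\theta_1\cdots d\theta_n$ relies on the compatibility structure and on repeated index-shifted use of Remark \ref{remark:Tnlambda}, and the subsequent re-summation into closed form requires carefully tracking which mark variable is attached to which kernel and the horizon conventions underlying $(\bZeta\boldsymbol{\rho}_\ell)^T$ and $(\bP\boldsymbol{\rho}_\ell)^T$, the attendant Fubini and series-interchange estimates being routine but not automatic.
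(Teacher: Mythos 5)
Your proposal follows essentially the same route as the paper's appendix proof: symmetrization onto the ordered simplex to cancel the $\tfrac{1}{n!}$, the telescoping $\theta$-integration via \eqref{eq:ckcounting} and Remark \ref{remark:Tnlambda} yielding the product $\bZeta^{i,k_n}(T-t_n,y_n)\prod_{m=2}^{n}\bP^{k_m,k_{m-1}}(t_m-t_{m-1},y_{m-1})\bM^{k_1}(t_1)$, the split of $\sum_{j}\boldsymbol{\rho}_\ell$ into the $j=n$ and $j\le n-1$ contributions producing $\overline{\bZeta\boldsymbol{\rho}_\ell}$ and $\overline{\bP\boldsymbol{\rho}_\ell}$ respectively, and the re-summation of the iterated convolutions $\bPE_{n-1}$ into $\bPs+\bPE_0$ so that Proposition \ref{prop:expectation}(i) reconstructs the stated expectations. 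The only (minor, harmless) divergence is the final assertion, where you take expectations in the pseudo-chaotic expansion and invoke Mecke's formula \eqref{eq:Mecke_2}, whereas the paper simply runs the same explicit computation with $\sum\boldsymbol{\rho}_\ell\equiv 1$; both are valid.
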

\begin{proof}[Proof of Theorem \ref{prop:expGamma}]
	Using successively the pseudo-chaotic expansion for $\myZ{T}$, the Mecke formula and  Lemma \ref{lemma:magic} we have 
	\begin{align*}
		&\E[\myZ{T}\boldsymbol{\Gamma}^\ell]=  \lim_{M\to \infty}\sum_{n=1}^{+\infty}\frac{1}{n!}\E[\boldsymbol{\mathcal{I}}_n(\T^n \myZ{T}\ind{([0,T]\times[0,M])^n})\boldsymbol{\Gamma}^\ell]\\
		&= \lim_{M\to \infty}\sum_{n=1}^{+\infty}\frac{1}{n!} \int_{(\bX_T^M)^n} \T^n_{(\bx_1,\dots,\bx_n)} \myZ{T}\E\left[ \boldsymbol{\Gamma}^\ell\circ\varepsilon_{(\bx_1,\dots,\bx_n)}^{+,n}\right] \br^{\otimes n}(d\bx_1,\ldots,d\bx_n) \\
		&=\E[\myZ{T}]\E[\boldsymbol{\Gamma}^\ell] +\sum_{n=1}^{+\infty} \frac{1}{n!} \int_{\bX_T^n}\T^n_{(\bx_1,\dots,\bx_n)} \myZ{T}\sum_{j=1}^n\boldsymbol{\rho}_\ell{(t_j,y_j,k_j)}\br^{\otimes n}(d\bx_1,\ldots,d\bx_n)\\
		&= \E[\myZ{T}]\E[\boldsymbol{\Gamma}^\ell] + \E\left[(\boldsymbol{Z}^{\boldsymbol{\bZeta\rho}_\ell}_T)^i\right] +\int_{0}^T\bZetaE^{i,.}(T-v)\left(\int_{0}^{v}\bPs(v-w)\E\left[\boldsymbol{Z}^{\boldsymbol{\bP\rho_\ell}}_{w}\right]dw + \E\left[\boldsymbol{Z}^{\boldsymbol{\bP\rho_\ell}}_{v}\right] \right)dv
	\end{align*}
	where  $X^{M}_T:=\{1,\dots,d\}\times[0,T]\times[0,M]\times\R_+$.
\end{proof}
\noindent Theorem \ref{prop:expGamma} is now applied to compute the covariance of two MSPDs having different kernels.
\subsection{Correlations of MSPDs}
By combining Proposition \ref{prop:expectation} and  Theorem \ref{prop:expGamma}, we deduce the following proposition.
\begin{theorem}[Correlations of two MSPDs]\label{prop:correlationZ}
	Let $\boldsymbol{Z}$ a $(\bZeta,\bM,\bP)-$MSPD and $\boldsymbol{\tilde Z}$ a $(\tilde\bZeta,\tilde\bM,\tilde\bP)-$MSPD, then for $0 \leq T \leq S$, their covariance (for $1\leq i, \ell \leq d$) is 
	\begin{equation}\label{eq:covMSPD}
		Cov\left(\boldsymbol{Z}^i_T, \boldsymbol{\tilde Z}^\ell_S\right) = \E\left[(\boldsymbol{Z}_T^{\boldsymbol{\bZeta\tilde\rho_\ell}})^i\right] + \int_{0}^T\bZetaE^{i,.}(T-v)\left(\int_{0}^{v}\bPs(v-w)\E[\boldsymbol{Z}^{\boldsymbol{\bP\tilde\rho_\ell}}_{w}]dw + \E[\boldsymbol{Z}^{\boldsymbol{\bP\tilde\rho_\ell}}_{v}] \right)dv,
	\end{equation}
	where  $\bZeta \boldsymbol{\tilde\rho_\ell} := (\bZeta \boldsymbol{\tilde\rho_\ell})^T$ and $\boldsymbol{Z^{\bZeta\tilde\rho_\ell}}$ is a $(\bZeta \boldsymbol{\tilde\rho_\ell},\bM,\bP)-$MSPD $; \\{\bP\boldsymbol{\tilde\rho_\ell}}:={(\bP\boldsymbol{\tilde\rho_\ell})^T}$
	and $\boldsymbol{Z^{\bP\boldsymbol{\tilde\rho_\ell}}}$ a $(\bP \boldsymbol{\tilde\rho_\ell},\bM,\bP)-$MSPD with
	$$
	\boldsymbol{\tilde\rho}_\ell{(k,t,y)}:=\tilde\bZeta^{\ell,k}(S-t,y)
	+ \int_{t}^{S}\overline{\tilde\bZeta}^{\ell,.}(S-v) \left(\tilde\bP^{.,k}(v-t,y) + \int_{t}^{v}\TbPs(v-w) \tilde\bP^{.,k}(w-t,y)dw \right)dv.
	$$
\end{theorem}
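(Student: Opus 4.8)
The plan is to reduce the covariance computation to an application of Theorem \ref{prop:expGamma} with $\boldsymbol{\Gamma}^\ell := \boldsymbol{\tilde Z}^\ell_S$. Since $\Cov(\boldsymbol{Z}^i_T,\boldsymbol{\tilde Z}^\ell_S) = \E[\boldsymbol{Z}^i_T\boldsymbol{\tilde Z}^\ell_S] - \E[\boldsymbol{Z}^i_T]\E[\boldsymbol{\tilde Z}^\ell_S]$, the first term is exactly what Theorem \ref{prop:expGamma} computes, provided I can verify that $\boldsymbol{\Gamma}^\ell = \boldsymbol{\tilde Z}^\ell_S$ satisfies hypothesis \eqref{relationgamma}, i.e.\ that its shifted expectation has the affine form $\E[\boldsymbol{\Gamma}^\ell] + \sum_{j=1}^p \boldsymbol{\rho}_\ell(k_j,t_j,y_j)$ with an explicit $\boldsymbol{\rho}_\ell$. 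The subtraction of $\E[\boldsymbol{Z}^i_T]\E[\boldsymbol{\Gamma}^\ell]$ then exactly cancels the first term on the right-hand side of the conclusion of Theorem \ref{prop:expGamma}, leaving the stated formula \eqref{eq:covMSPD}.

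First I would identify $\boldsymbol{\rho}_\ell$. By Definition \ref{def:compensated shift}, $\boldsymbol{\tilde Z}_S\circ\varepsilon_{(\bx_1,\dots,\bx_p)}^{+,p}$ is a $(\tilde\bZeta,\tilde\bM_{(\bx_1,\dots,\bx_p)},\tilde\bP)$-MSPD with baseline $\tilde\bM_{(\bx_1,\dots,\bx_p)}^\ell(S) = \tilde\bM^\ell(S) + \sum_{j=1}^p \tilde\bP^{\ell,k_j}(S-t_j,y_j)$, where one must use that the $\bx_j$ satisfy the compatibility condition \eqref{eq:compatibility}. Applying the expectation formula of Proposition \ref{prop:expectation}(i) (equivalently its shifted form (ii)) to $\boldsymbol{\tilde Z}_S$ and subtracting $\E[\boldsymbol{\tilde Z}_S]$ yields, after interchanging the finite sum over $j$ with the integrals, the expression
$$
\boldsymbol{\tilde\rho}_\ell(k,t,y) = \tilde\bZeta^{\ell,k}(S-t,y) + \int_{t}^{S}\overline{\tilde\bZeta}^{\ell,.}(S-v)\left(\tilde\bP^{.,k}(v-t,y) + \int_{t}^{v}\TbPs(v-w)\tilde\bP^{.,k}(w-t,y)\,dw\right)dv,
$$
which matches the statement; here the lower limit $t$ in the integrals (rather than $0$) comes from the convention $\bP(\tau,\cdot)=0$ for $\tau<0$, so that $\tilde\bP^{.,k}(v-t,y)$ vanishes for $v<t$. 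I would also check the integrability condition \eqref{eq:integrkernelstill}: since $\tilde\bZeta$ is a $d$-kernel, $\tilde\bP$ is non-explosive, and $\TbPs \in L^1$, each term of $\boldsymbol{\tilde\rho}_\ell$ integrates against $\bnu^k(dy)$ after multiplication by $\bZeta^{i,k}(T-t,y)$, using Fubini and the finiteness of $\boldsymbol{K}$ and $\|\TbPs\|_1$.

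Once $\boldsymbol{\tilde\rho}_\ell$ is in hand and the hypotheses of Theorem \ref{prop:expGamma} are verified, the conclusion is immediate: Theorem \ref{prop:expGamma} gives
$$
\E[\boldsymbol{Z}^i_T\boldsymbol{\tilde Z}^\ell_S] = \E[\boldsymbol{Z}^i_T]\E[\boldsymbol{\tilde Z}^\ell_S] + \E[(\boldsymbol{Z}_T^{\boldsymbol{\bZeta\tilde\rho_\ell}})^i] + \int_0^T \bZetaE^{i,.}(T-v)\left(\int_0^v\bPs(v-w)\E[\boldsymbol{Z}^{\boldsymbol{\bP\tilde\rho_\ell}}_w]\,dw + \E[\boldsymbol{Z}^{\boldsymbol{\bP\tilde\rho_\ell}}_v]\right)dv,
$$
and subtracting $\E[\boldsymbol{Z}^i_T]\E[\boldsymbol{\tilde Z}^\ell_S]$ gives \eqref{eq:covMSPD}. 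The main obstacle I anticipate is not any of the final algebra but the justification that $\boldsymbol{\tilde Z}^\ell_S$ genuinely falls under the scope of Theorem \ref{prop:expGamma} — namely that $\boldsymbol{\tilde Z}^\ell_S \in L^2(\Omega)$ (which needs a second-moment bound for MSPDs, presumably available from the non-explosivity of the kernels) and that the affine-shift identity \eqref{relationgamma} holds for \emph{every} admissible tuple of points, not merely formally; this is where the compatibility condition and the careful bookkeeping of Definition \ref{def:compensated shift} are essential. A secondary point worth stating explicitly is that $\boldsymbol{\tilde Z}$ and $\boldsymbol{Z}$ are built from the \emph{same} Poisson measure $\bN$, so the shift operator $\varepsilon^{+,p}_{(\bx_1,\dots,\bx_p)}$ acts simultaneously and coherently on both, which is what makes the Mecke-plus-pseudo-chaos machinery applicable to the product.
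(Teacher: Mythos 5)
Your proposal is correct and follows essentially the same route as the paper: identify $\boldsymbol{\tilde\rho}_\ell$ from Proposition \ref{prop:expectation}(ii) applied to the shifted process $\boldsymbol{\tilde Z}^\ell_S\circ\varepsilon^{+,p}_{(\bx_1,\dots,\bx_p)}$, verify hypothesis \eqref{relationgamma}, and then invoke Theorem \ref{prop:expGamma} with $\boldsymbol{\Gamma}^\ell:=\boldsymbol{\tilde Z}^\ell_S$, the product-of-expectations term cancelling in the covariance. Your additional remarks on integrability, the $L^2$ requirement, and the fact that both MSPDs are driven by the same Poisson measure are sensible points of care that the paper leaves implicit, but they do not change the argument.
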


\begin{proof}
	Let $p \in \N^*$, $(\bx_1 \dots, \bx_p)\in (\bX_T)^p$ satisfying the compatibility condition (\ref{eq:compatibility}), then from Proposition \ref{prop:expectation} we have that
	\begin{align*}
		\E[\boldsymbol{\tilde Z}_S^\ell\circ \varepsilon_{(\bx_1,\dots,\bx_p)}^{+,p}]&=\E[\boldsymbol{\tilde Z}_S^\ell] +  \sum_{j=1}^{p} \tilde\bZeta^{\ell,k_j}(S-t_j,y_j)\\
		&+\sum_{j=1}^{p} \int_{t_j}^{S}\overline{\tilde\bZeta}^{\ell,.}(S-v) \left(\tilde\bP^{.,k_j}(v-t_j,y_j) + \int_{t_j}^{v}\TbPs(v-w) \tilde\bP^{.,k_j}(w-t_j,y_j)dw \right)dv\\
		&=\E[\boldsymbol{\tilde Z}_S^\ell]  + \sum_{j=1}^{p}\boldsymbol{\tilde\rho}_\ell {(k_j,t_j,y_j)}.
	\end{align*}
	Thus applying Theorem \ref{prop:expGamma} with $\boldsymbol{\Gamma}^\ell:=\boldsymbol{\tilde Z}^\ell$ gives
	\begin{align*}
		Cov\left(\boldsymbol{Z}^i_T,\boldsymbol{\tilde Z}^\ell_S\right) = \E\left[(\boldsymbol{Z}_T^{\boldsymbol{\bZeta\tilde\rho_\ell}})^i\right] + \int_{0}^T\bZetaE^{i,.}(T-v)\left(\int_{0}^{v}\bPs(v-w)\E[\boldsymbol{Z}^{\boldsymbol{\bP\tilde\rho_\ell}}_{w}]dw + \E[\boldsymbol{Z}^{\boldsymbol{\bP\tilde\rho_\ell}}_{v}] \right)dv,
	\end{align*}
	where $\boldsymbol{Z^{\bZeta\tilde\rho_\ell}}$ is a $(\bZeta \boldsymbol{\tilde\rho_\ell},\bM,\bP)-$MSPD and $\boldsymbol{Z^{\bP\boldsymbol{\tilde\rho_\ell}}}$ a $(\bP \boldsymbol{\tilde\rho_\ell},\bM,\bP)-$MSPD.
\end{proof}
\noindent Remark that the  second term  in  the right hand side  of (\ref{eq:covMSPD}) can also be written  as the expectation of a $(\bZeta,\E[\boldsymbol{Z}^{\boldsymbol{\bP\tilde\rho_\ell}}],\bP)-$MSPD. More generally, 
this procedure, used here to compute the covariance, can be easily iterated to compute moments of further orders, to the price of cumbersome expressions.

\subsection{Case of a counting process with separable kernel}
In the case of counting process with separable \textit{d-}kernel $\bP(t,y) = \boldsymbol{\Phi}(t) \star\boldsymbol{B}(y)$, the expression of the covariance of the process at two different dates can be simplified, thanks to an extra convolution in the final expression.  For $1\leq i, \ell \leq d$, we introduce 
the transposed  \textit{d-}dimensional vector $\boldsymbol{\mathfrak{C}}$
\begin{equation}\label{eq:correlseparable}
(\boldsymbol{\mathfrak{C}}^k)_{1\leq k \leq d}:=\left(\frac{\int_{\R_+}\boldsymbol{B}^{i,k}(y) \boldsymbol{B}^{\ell,k}(y)\bnu^{k}(dy)}{\int_{\R_+}\boldsymbol{B}^{i,k}(y)\bnu^{k}(dy)\int_{\R_+}\boldsymbol{B}^{\ell,k}(y)\bnu^{k}(dy)}\right)_{1\leq k \leq d}
\hspace*{-5mm}=\left(\frac{\mathbb E( \boldsymbol{B}^{i,k}(Y^k) \boldsymbol{B}^{\ell,k}(Y^k) ) }{\mathbb E(\boldsymbol{B}^{i,k}(Y^k)) \mathbb E (\boldsymbol{B}^{\ell,k}(Y^k) )}\right)_{1\leq k \leq d}.
\end{equation}
Remark  that $\boldsymbol{\mathfrak{C}}^{k} = 1$
	means that $Cov(\boldsymbol{B}^{i,k}(Y^k),\boldsymbol{B}^{\ell,k}(Y^k))=0$.

\begin{prop}\label{prop:cov_sep}
	Let $\boldsymbol{H}$ a $(\boldsymbol{\bI_d},\bM,\bP)-$MSPD counting process and $\bP$ a separable \textit{d-}kernel such that $\bP(t,y) = \boldsymbol{\Phi}(t) \star\boldsymbol{B}(y)$, then for $0 \leq T \leq S$  and  $1\leq i, \ell \leq d$ we have 
	\begin{align*}
		&Cov\left(\boldsymbol{H}^i_T,\boldsymbol{H}^\ell_S\right)\\
		&=\int_{0}^T\int_{u}^{T}\bPs^{i,.}(t-u)dt\star\left( \boldsymbol{\bI_d}^{\ell,.}+ \boldsymbol{\mathfrak{C}}\star\int_{u}^{S}\bPs^{\ell,.}(v-u)dv\right)\left(\int_{0}^{u}\bPs(u-v)\bM(v)dv + \bM(u) \right)du\\
		&+ \int_{0}^{T}\boldsymbol{\bI_d}^{i,.}\star\left(\boldsymbol{\bI_d}^{\ell,.}+ \int_{u}^{S}\bPs^{\ell,.}(v-u)dv\right)\left(\int_{0}^{u}\bPs(u-v)\bM(v)dv + \bM(u) \right)du,
	\end{align*}
	where the transposed  \textit{d-}dimensional vector $\boldsymbol{\mathfrak{C}}$ is given in \eqref{eq:correlseparable}. \\
	 Moreover, if $\boldsymbol{\mathfrak{C}}^{k} = 1$ for all $1\leq k \leq d$
	 the expression simplifies as 
		\begin{align*}
			&Cov\left(\boldsymbol{H}^i_T,\boldsymbol{H}^\ell_S\right)=\\
			&\hspace{-1.5cm}\int_{0}^T\left(\boldsymbol{\bI_d}^{i,.} + \int_{u}^{T}\bPs^{i,.}(y-u)dy\right)\star\left( \boldsymbol{\bI_d}^{\ell,.}+ \int_{u}^{S}\bPs^{\ell,.}(v-u)dv\right)\left(\int_{0}^{u}\bPs(u-v)\bM(v)dv + \bM(u) \right)du.
		\end{align*}
		
\end{prop}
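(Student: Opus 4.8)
The plan is to specialize Theorem~\ref{prop:correlationZ} to the situation where both MSPDs are the counting process $\boldsymbol{H}$, i.e. $\bZeta=\tilde\bZeta=\boldsymbol{\bI_d}$, $\bM=\tilde\bM$ and $\bP=\tilde\bP=\boldsymbol{\Phi}\star\boldsymbol{B}$, and then to exploit the separable structure to carry out the algebraic simplifications. Since $\bZeta=\tilde\bZeta=\boldsymbol{\bI_d}$ we have $\bZetaE=\boldsymbol{\bI_d}$, $\overline{\tilde\bZeta}=\boldsymbol{\bI_d}$ and $\TbPs=\bPs$, so Theorem~\ref{prop:correlationZ} gives
\[
\Cov\left(\boldsymbol{H}^i_T,\boldsymbol{H}^\ell_S\right)=\E\left[\big(\boldsymbol{Z}^{\bZeta\tilde\rho_\ell}_T\big)^i\right]+\int_0^T\boldsymbol{\bI_d}^{i,.}\left(\int_0^v\bPs(v-w)\,\E\left[\boldsymbol{Z}^{\bP\tilde\rho_\ell}_w\right]dw+\E\left[\boldsymbol{Z}^{\bP\tilde\rho_\ell}_v\right]\right)dv,
\]
and, by Proposition~\ref{prop:expectation}(ii) applied to $\boldsymbol{H}^\ell_S$,
\[
\boldsymbol{\tilde\rho}_\ell(k,t,y)=\boldsymbol{\bI_d}^{\ell,k}+\int_t^S\boldsymbol{\bI_d}^{\ell,.}\Big(\tilde\bP^{.,k}(v-t,y)+\int_t^v\bPs(v-w)\tilde\bP^{.,k}(w-t,y)\,dw\Big)dv .
\]
Throughout we use the intensity‑expectation identity $\E[\bLamb_u]=\bM(u)+\int_0^u\bPs(u-v)\bM(v)\,dv$ (the corollary following Proposition~\ref{prop:expectation}) and the separability $\tilde\bP^{m,k}(s,y)=\boldsymbol{\Phi}^{m,k}(s)\boldsymbol{B}^{m,k}(y)$, so that $\bPE=\boldsymbol{\Phi}\star\overline{\boldsymbol{B}}$ with $\overline{\boldsymbol{B}}^{m,k}:=\E[\boldsymbol{B}^{m,k}(Y^k)]$.

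For the first summand, Proposition~\ref{prop:expectation}(i) applied to the auxiliary MSPD $\boldsymbol{Z}^{\bZeta\tilde\rho_\ell}$ involves only the $\bnu^k$‑mean of its kernel, namely $\boldsymbol{\bI_d}^{i,k}\int_{\R_+}\boldsymbol{\tilde\rho}_\ell(k,t,y)\bnu^k(dy)$. Since $\bZeta=\boldsymbol{\bI_d}$ contributes no factor in $y$, the $y$‑integral sees the $\boldsymbol{B}$'s inside $\boldsymbol{\tilde\rho}_\ell$ only through their means, so $\tilde\bP$ is replaced by $\bPE$ and a double application of the resolvent identity \eqref{eq:magic_convol} collapses the nested convolution into $\int_{\R_+}\boldsymbol{\tilde\rho}_\ell(k,t,y)\bnu^k(dy)=\boldsymbol{\bI_d}^{\ell,k}+\int_t^S\bPs^{\ell,k}(v-t)\,dv$. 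Feeding this into Proposition~\ref{prop:expectation}(i) produces exactly the second line of the claim, $\E[(\boldsymbol{Z}^{\bZeta\tilde\rho_\ell}_T)^i]=\int_0^T\boldsymbol{\bI_d}^{i,.}\star\big(\boldsymbol{\bI_d}^{\ell,.}+\int_u^S\bPs^{\ell,.}(v-u)\,dv\big)\big(\int_0^u\bPs(u-v)\bM(v)\,dv+\bM(u)\big)du$.

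The integral summand is where the separable structure really bites. The auxiliary MSPD $\boldsymbol{Z}^{\bP\tilde\rho_\ell}$ has a kernel carrying the additional factor $\boldsymbol{B}^{i,k}(y)$ coming from $\bP^{i,k}(T-t,y)=\boldsymbol{\Phi}^{i,k}(T-t)\boldsymbol{B}^{i,k}(y)$; when Proposition~\ref{prop:expectation}(i) is applied, the $y$‑integration pairs this $\boldsymbol{B}^{i,k}$ against the $\boldsymbol{B}^{\ell,k}(y)$ sitting on the first excitation edge of $\boldsymbol{\tilde\rho}_\ell$, producing the second moment $\E[\boldsymbol{B}^{i,k}(Y^k)\boldsymbol{B}^{\ell,k}(Y^k)]$; normalizing by the two means $\overline{\boldsymbol{B}}^{i,k}\overline{\boldsymbol{B}}^{\ell,k}$ (which are absorbed into the resolvent factors) turns this into the ratio $\boldsymbol{\mathfrak{C}}^k$ of \eqref{eq:correlseparable}. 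Reordering the nested time integrals by Fubini and telescoping every convolution chain of $\bPE$ via \eqref{eq:magic_convol} then yields, on the $i$‑side, the factor $\int_u^T\bPs^{i,.}(t-u)\,dt$ and, on the $\ell$‑side, $\boldsymbol{\bI_d}^{\ell,.}+\boldsymbol{\mathfrak{C}}\star\int_u^S\bPs^{\ell,.}(v-u)\,dv$, weighted by $\E[\bLamb_u]=\int_0^u\bPs(u-v)\bM(v)\,dv+\bM(u)$ and integrated over $u\in[0,T]$; this is the first line of the claim. The main obstacle is precisely this reduction: one must verify that the nested matrix convolutions together with the sum over all intermediate dimensions in $\boldsymbol{\tilde\rho}_\ell$ conspire so that, of the $\boldsymbol{B}$'s attached to the cluster emanating from the common source event at time $u$ in dimension $k$, only the one on the first excitation edge enters as a genuine random variable (hence contributes the second moment, hence $\boldsymbol{\mathfrak{C}}^k$), every later edge passing through a mean kernel and being absorbed into $\bPs$ — all while keeping the identities $\bPE=\boldsymbol{\Phi}\star\overline{\boldsymbol{B}}$ and \eqref{eq:magic_convol} straight.

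Finally, if $\boldsymbol{\mathfrak{C}}^k=1$ for every $k$ — equivalently $\Cov(\boldsymbol{B}^{i,k}(Y^k),\boldsymbol{B}^{\ell,k}(Y^k))=0$ for every $k$, as observed after \eqref{eq:correlseparable} — the cross term loses its correction factor, so the $\ell$‑side factor in both summands is the common row vector $\boldsymbol{\bI_d}^{\ell,.}+\int_u^S\bPs^{\ell,.}(v-u)\,dv$. The two $i$‑side contributions then add, $\boldsymbol{\bI_d}^{i,.}$ from $\E[(\boldsymbol{Z}^{\bZeta\tilde\rho_\ell}_T)^i]$ and $\int_u^T\bPs^{i,.}(t-u)\,dt$ from the integral summand, into $\boldsymbol{\bI_d}^{i,.}+\int_u^T\bPs^{i,.}(y-u)\,dy$, and a Hadamard factorization of the resulting integrand over $u$ gives the symmetric expression in the statement.
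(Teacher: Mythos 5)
Your proposal is correct and follows essentially the same route as the paper's proof: specialize Theorem \ref{prop:correlationZ} to $\bZeta=\tilde\bZeta=\boldsymbol{\bI_d}$, compute the $\bnu^k$-means of the auxiliary kernels $\bZeta\boldsymbol{\tilde\rho}_\ell$ and $\bP\boldsymbol{\tilde\rho}_\ell$ using separability (which is exactly where the pairing $\boldsymbol{B}^{i,k}(Y^k)\boldsymbol{B}^{\ell,k}(Y^k)$ produces $\boldsymbol{\mathfrak{C}}^k$), and collapse the convolution chains via \eqref{eq:magic_convol} before recombining. The only point treated more tersely than in the paper is the cancellation in $\int_0^v\bPs(v-w)\E[\boldsymbol{Z}^{\bP\tilde\rho_\ell}_w]dw+\E[\boldsymbol{Z}^{\bP\tilde\rho_\ell}_v]$, where the $-\bPE$ term from the resolvent identity absorbs the standalone expectation, but your description of the mechanism is accurate.
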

\begin{proof}
Applying 	Theorem  \ref{prop:correlationZ}  for  $\bZeta = \boldsymbol{\bI_d}$ ($\boldsymbol{H}$ is a counting process) 
	\begin{equation}\label{eq:covcounting}
		Cov\left(\boldsymbol{H}^i_T,\boldsymbol{H}^\ell_S\right) = \E[(\boldsymbol{Z}^{\boldsymbol{\bZeta\rho_\ell}}_T)^i] + \int_{0}^T\boldsymbol{\bI_d}^{i,.}\left(\int_{0}^{v}\bPs(v-w)\E[\boldsymbol{Z}^{\boldsymbol{\bP\rho_\ell}}_{w}]dw + \E[\boldsymbol{Z}^{\boldsymbol{\bP\rho_\ell}}_{v}] \right)dv,
	\end{equation}
where $\boldsymbol{\bZeta\rho_\ell}:=(\boldsymbol{\bZeta\rho_\ell})^T $ is the diagonal matrix whose  diagonal elements are the components of the vector  $\boldsymbol{\rho_\ell}$  given below,
	 $\boldsymbol{Z^{\bZeta\rho_\ell}}$ is a $(\boldsymbol{\bZeta\rho_\ell},\bM,\bP)-$MSPD ,  $\boldsymbol{Z}^{\boldsymbol{\bP\rho_\ell}}$ is a $(\boldsymbol{\bP\rho_\ell},\bM,\bP)-$MSPD and 
	\begin{align*}
		\boldsymbol{\rho_\ell}(k,t,y)&=\ind{\ell=k}
		+ \int_{t}^{S}\boldsymbol{\bI_d}^{\ell,.} \left(\bP^{.,k}(v-t,y) + \int_{t}^{v}\bPs(v-w) \bP^{.,k}(w-t,y)dw \right)dv\\
		&= \ind{\ell=k}
		+ \int_{t}^{S}\left(\bP^{\ell,k}(v-t,y) + \int_{t}^{v}\bPs^{\ell,.}(v-w) \bP^{.,k}(w-t,y)dw \right)dv.
	\end{align*}
	The proof is  divided in four  steps,  first calculating in steps 1 and 2 the expectation of the corresponding MSPDs  $\E[\boldsymbol{Z}^{\boldsymbol{\bZeta\rho_\ell}}_T]$ and $\E[\boldsymbol{Z}^{\boldsymbol{\bP\rho_\ell}}_{w}]$,  then step 3 computes $\int_{0}^{v}\bPs(v-w)\E[\boldsymbol{Z}^{\boldsymbol{\bP\rho_\ell}}_{w}]dw$ and finally step 4  gathers the previous expressions  to give the final result.\\
Steps 1 and 2 rely on the assumption of  a  separable excitation kernel $\bP(t,y)= \boldsymbol{\Phi}(t)\star\boldsymbol{B}(y)$. We recall the notation $\overline{\boldsymbol{B}}^{\ell,k}:= \int_{\R_+}\boldsymbol{B}^{\ell,k}(y)\bnu^{k}(dy)$.
	In what follows, the ratio $\frac{\boldsymbol{B}^{\ell,k}(y)}{\boldsymbol{\overline{B}}^{\ell,k}}$ represents   the relative value of the outcome   $\boldsymbol{B}^{\ell,k}(y)$ with respect  to its mean value $\boldsymbol{\overline{B}}^{\ell,k}$.\\

\noindent \underline{Step 1:} Computing  $\E[(\boldsymbol{Z}^{\boldsymbol{\bZeta\rho_\ell}}_T)^i]$.\\
	By  putting the relative quantity  $\frac{\boldsymbol{B}^{\ell,k}(y)}{\boldsymbol{\overline{B}}^{\ell,k}}$ in factor (thanks to the separability of the kernel) and then by using the convolution relation \eqref{eq:magic_convol}
	\begin{align*}
	&	\boldsymbol{\bZeta\rho_\ell}^{i,k}(u,y)=\ind{i=k} \left(\ind{\ell=k} +\int_{u}^{S} \left(\bP^{\ell,k}(v-u,y) + \int_{u}^{v}\bPs^{\ell,.}(v-w) \bP^{.,k}(w-u,y)dw \right)dv\right)\\
		&=\ind{i=k} \left(\ind{\ell=k} +\int_{u}^{S} \left(\bPE^{\ell,k}(v-u)\frac{\boldsymbol{B}^{\ell,k}(y)}{\boldsymbol{\overline{B}}^{\ell,k}} + \left[\int_{u}^{v}\bPs(v-w) \bPE(w-u)dw\right]^{\ell,k}\frac{\boldsymbol{B}^{\ell,k}(y)}{\boldsymbol{\overline{B}}^{\ell,k}}  \right)dv\right)\\
		&= \ind{i=k} \left(\ind{\ell=k} +\frac{\boldsymbol{B}^{\ell,k}(y)}{\boldsymbol{\overline{B}}^{\ell,k}}\int_{u}^{S}\bPs^{\ell,k}(v-u)dv\right); \quad u \leq T.
	\end{align*}
Integrating with respect to  $\bnu^{k}(dy)$  then yields
	$$
	\overline{\boldsymbol{\bZeta\rho_\ell}}^{i,k}(u) = \ind{i=k} \left(\ind{\ell=k} +\int_{u}^{S}\bPs^{\ell,k}(v-u)dv\right),  \quad u \leq T,
	$$
and using Proposition \ref{prop:expectation} for a $(\boldsymbol{\bZeta\rho_\ell},\bM,\bP)-$MSPD 
	\begin{align*}
		\E[(\boldsymbol{Z}^{\boldsymbol{\bZeta\rho_\ell}}_T)^i]= \int_{0}^{T}\boldsymbol{\bI_d}^{i,.}\star\left(\boldsymbol{\bI_d}^{\ell,.}+ \int_{u}^{S}\bPs^{\ell,.}(v-u)dv\right)\left(\int_{0}^{u}\bPs(u-v)\bM(v)dv + \bM(u) \right)du.
	\end{align*}
\underline{Step 2:} Computing $\E[(\boldsymbol{Z}^{\boldsymbol{\bP\rho_\ell}}_T)^i]$.\\
	By repeating the same methodology as in Step 1, we have 
	\begin{align*}
		\boldsymbol{\bP\rho_\ell}^{i,k}(u,y)
		&= \bP^{i,k}(T-u,y) \left(\ind{\ell=k} + \frac{\boldsymbol{B}^{\ell,k}(y)}{\boldsymbol{\overline{B}}^{\ell,k}}\int_{u}^{S}\bPs^{\ell,k}(v-u)dv\right).
	\end{align*}
	Integrating with respect to  $\bnu^{k}(dy)$ yields 
	\begin{align*}
		\overline{\boldsymbol{\bP \rho_\ell}}^{i,k}(u) &= \bPE^{i,k}(T-u)\ind{\ell=k} + \int_{\R_+}\boldsymbol{\Phi}^{i,k}(T-u)\boldsymbol{B}^{i,k}(y)\frac{\boldsymbol{B}^{\ell,k}(y)}{\boldsymbol{\overline{B}}^{\ell,k}}\int_{u}^{S}\bPs^{\ell,k}(v-u)dv \nu^{k}(dy)\\
		&= \bPE^{i,k}(T-u)\ind{\ell=k} + \bPE^{i,k}(T-u)\int_{u}^{S}\bPs^{\ell,k}(v-u)dv \int_{\R_+}\frac{\boldsymbol{B}^{i,k}(y)}{\overline{\boldsymbol{B}}^{i,k}}\frac{\boldsymbol{B}^{\ell,k}(y)}{\boldsymbol{\overline{B}}^{\ell,k}}\nu^{k}(dy)\\
		&= \bPE^{i,k}(T-u)\left(\ind{\ell=k} +     \boldsymbol{\mathfrak{C}}^k  \int_{u}^{S}\bPs^{\ell,k}(v-u)dv\right)
	\end{align*}
that is $$
	\overline{\bP\rho}^{i,.}(u)=\bPE^{i,.}(T-u)\star\left( \boldsymbol{\bI_d}^{\ell,.}+ \boldsymbol{\mathfrak{C}}\star\int_{u}^{S}\bPs^{\ell,.}(v-u)dv\right),
	$$
	and  using Proposition \ref{prop:expectation} for a $(\boldsymbol{\bP\rho_\ell},\bM,\bP)-$MSPD
	\begin{align*}
		\E[(\boldsymbol{Z}^{\boldsymbol{\bP\rho_\ell}}_T)^i]= \int_{0}^{T}\bPE^{i,.}(T-u)\star\left( \boldsymbol{\bI_d}^{\ell,.}+ \boldsymbol{\mathfrak{C}}\star\int_{u}^{S}\bPs^{\ell,.}(v-u)dv\right)\left(\int_{0}^{u}\bPs(u-v)\bM(v)dv + \bM(u) \right)du.
	\end{align*}
	\underline{Step 3:} Computing $\int_{0}^{t}\bPs(t-w)\E[\boldsymbol{Z}^{\boldsymbol{\bP\rho_\ell}}_{w}]dw $.\\
	Using Step 2 and again the  convolution relation \eqref{eq:magic_convol}, we have
	\begin{align*}
		&{\sum_{k=1}^{d}\int_{0}^{t}\bPs^{j,k}(t-w)\E[(\boldsymbol{Z}^{\boldsymbol{\bP\rho_\ell}}_w)^k]dw}\\
		&\hspace{-1.2cm}=\sum_{k=1}^{d}\int_{0}^{t}\bPs^{j,k}(t-w)\int_{0}^{w}\bPE^{k,.}(w-u)\star\left( \boldsymbol{\bI_d}^{\ell,.}+ \boldsymbol{\mathfrak{C}}\star\int_{u}^{S}\bPs^{\ell,.}(v-u)dv\right)\left(\int_{0}^{u}\bPs(u-v)\bM(v)dv + \bM(u) \right)dudw\\
		&\hspace{-1.2cm}=\int_{0}^{t}\left[\int_{u}^{t}\bPs(t-w)\bPE(w-u)dw\right]^{j,.}\star\left( \boldsymbol{\bI_d}^{\ell,.}+ \boldsymbol{\mathfrak{C}}\star\int_{u}^{S}\bPs^{\ell,.}(v-u)dv\right)\left(\int_{0}^{u}\bPs(u-v)\bM(v)dv + \bM(u) \right)du\\
		&\hspace{-1.2cm}=\int_{0}^{t}\left(\bPs^{j,.}(t-u)-\bPE^{j,.}(t-u)\right)\star\left( \boldsymbol{\bI_d}^{\ell,.}+ \boldsymbol{\mathfrak{C}}\star\int_{u}^{S}\bPs^{\ell,.}(v-u)dv\right)\left(\int_{0}^{u}\bPs(u-v)\bM(v)dv + \bM(u) \right)du\\
		&\hspace{-1.2cm}=\int_{0}^{t}\bPs^{j,.}(t-u)\star\left( \boldsymbol{\bI_d}^{\ell,.}+ \boldsymbol{\mathfrak{C}}\star\int_{u}^{S}\bPs^{\ell,.}(v-u)dv\right)\left(\int_{0}^{u}\bPs(u-v)\bM(v)dv + \bM(u) \right)du - \E[(\boldsymbol{Z^{\bP\rho_\ell}_t})^{j}].
	\end{align*}
\underline{Step 4:} Final result. 	\\
Coming back to \eqref{eq:covcounting}, and integrating the  results of the previous steps,
	\begin{align*}
		&Cov\left(\boldsymbol{H}^i_T,\boldsymbol{H}^\ell_S\right)- \E[(\boldsymbol{Z}^{\boldsymbol{\bZeta\rho_\ell}}_T)^i]= \int_{0}^T\boldsymbol{\bI_d}^{i,.}  \left(\int_{0}^{t}\bPs(t-w)  E[\boldsymbol{Z}^{\boldsymbol{\bP\rho_\ell}}_{w}]dw + \E[\boldsymbol{Z}^{\boldsymbol{\bP\rho_\ell}}_{t}]\right)dt\\
		&\hspace{-1.5cm}=\sum_{j=1}^{d}\int_{0}^T\boldsymbol{\bI_d}^{i,j}  \int_{0}^{t}\bPs^{j,.}(t-u)\star\left( \boldsymbol{\bI_d}^{\ell,.}+ \boldsymbol{\mathfrak{C}}\star\int_{u}^{S}\bPs^{\ell,.}(v-u)dv\right)\left(\int_{0}^{u}\bPs(u-v)\bM(v)dv + \bM(u) \right)dudt\\
		&\hspace{-1.5cm}=\int_{0}^T\int_{0}^{t}\sum_{j=1}^{d}\ind{i=j}\bPs^{j,.}(t-u)\star\left( \boldsymbol{\bI_d}^{\ell,.}+ \boldsymbol{\mathfrak{C}}\star\int_{u}^{S}\bPs^{\ell,.}(v-u)dv\right)\left(\int_{0}^{u}\bPs(u-v)\bM(v)dv + \bM(u) \right)dudt\\
		&\hspace{-1.5cm}=\int_{0}^T\int_{u}^{T}\bPs^{i,.}(t-u)dt\star\left( \boldsymbol{\bI_d}^{\ell,.}+ \boldsymbol{\mathfrak{C}}\star\int_{u}^{S}\bPs^{\ell,.}(v-u)dv\right)\left(\int_{0}^{u}\bPs(u-v)\bM(v)dv + \bM(u) \right)du.
	\end{align*}       
By replacing the expression of $\E[(\boldsymbol{Z}^{\boldsymbol{\bZeta\rho_\ell}}_T)^i]$ computed in Step 1, we get 
	\begin{align*}
		&Cov\left(\boldsymbol{H}^i_T,\boldsymbol{H}^\ell_S\right)\\
		&=\int_{0}^T\int_{u}^{T}\bPs^{i,.}(t-u)dt\star\left( \boldsymbol{\bI_d}^{\ell,.}+ \boldsymbol{\mathfrak{C}}\star\int_{u}^{S}\bPs^{\ell,.}(v-u)dv\right)\left(\int_{0}^{u}\bPs(u-v)\bM(v)dv + \bM(u) \right)du\\
		&+ \int_{0}^{T}\boldsymbol{\bI_d}^{i,.}\star\left(\boldsymbol{\bI_d}^{\ell,.}+ \int_{u}^{S}\bPs^{\ell,.}(v-u)dv\right)\left(\int_{0}^{u}\bPs(u-v)\bM(v)dv + \bM(u) \right)du.
	\end{align*}
\end{proof}

\section{Conclusion}
In this paper,  we presented a general method for calculating  different quantities related to multidimensional self-exciting processes with dependencies (MSPD). This class of processes encompasses several quantities useful  for risk assessment, such as counting, loss,  intensity processes and their shifted versions, in a framework of cross-dependencies and impact of the severity component on the frequency component.  The methodology relies on the Poisson imbedding, the pseudo-chaotic expansion and Malliavin calculus, and is illustrated here to compute explicit formula for the correlations. A forthcoming companion paper will be dedicated to further developments such as the computation of moments of  higher order, as well as the pricing of insurance contracts with underlying asset a MSPD (e.g. stop-loss contracts).

\section*{Appendix}
\begin{proof}[Proof of Lemma \ref{lemma:magic}]
The aim is to compute the following quantity
\begin{align*}
&A:=\sum_{n=1}^{+\infty} \frac{1}{n!} \int_{\bX_T^n} \T^n_{(\bx_1,\dots,\bx_n)} \myZ{T}\sum_{j=1}^n\boldsymbol{\rho}_\ell{(t_j,y_j,k_j)}\br^{\otimes n}(d\bx_1,\ldots,d\bx_n) \\
&=\sum_{n=1}^{+\infty}\int_{\Delta_T^n}\sum_{k_1=1}^d \dots \sum_{k_n=1}^d\int_{\R_+^2}\T^n_{(\bx_1,\dots,\bx_n)} \myZ{T}\sum_{j=1}^n\boldsymbol{\rho}_\ell{(t_j,y_j,k_j)}dt_1 d\theta_1 \bnu^{k_1}(dy_1)\cdots  dt_n d\theta_n \bnu^{k_n}(dy_n).
\end{align*}
where $\bx_i :=(k_i,t_i,\theta_i,y_i)$, $\Delta_T^n$  is  the simplex 
$$\Delta_T^n:=\{(\bx_1,\cdots,\bx_n)\in\bX^n,t_1 < \cdots< t_i<t_{i+1}<\cdots<t_n<T\}$$
so that for any symmetric map $f$, $\frac{1}{n!} \int_{[0,T]^n} f(t_1,\ldots,t_n) dt_1 \cdots dt_n = \int_{\Delta_T^n} f(t_1,\ldots,t_n) dt_1 \cdots dt_n$. 
The  proof is divided in 5 steps, by successively integrating  with respect to the $\theta_i$  (Step 1), then  the $y_i$  (Step 2), followed by the  $k_i$ (Step 3), and finally with respect to the $t_i$  (Step 4). The last step consists in summing all the previous  terms depending on $n$ (Step 5). \\
\underline{Step 1 :} Integration with respect to $\theta$.\\
	Since $\boldsymbol{\rho_\ell}$ does not depend on $\theta$, we integrate first $\T^n_{(\bx_1,\dots,\bx_n)} \myZ{T}$ with respect to the $\theta_i$ variables by making use of (\ref{eq:ckcounting}) and Remark \ref{remark:Tnlambda} we have
	\begin{align*}
		&\int_{\R^n_+} \T^n_{(\bx_{1}, \dots, \bx_{n})} \myZ{T} d\theta_1 \dots d\theta_n  \\
		&=\int_{\R^n_+} \bZeta^{i,k_n}(T-t_n,y_n)\T^{n-1}_{(\bx_1,\dots,\bx_{n-1})}\ind{\theta_{n}\leq \bLamb^{k_n}_{t_n}} d\theta_1 \dots d\theta_n  \\
		&= \bZeta^{i,k_n}(T-t_n,y_n) \int_{\R_+^{n-1}} \T^{n-1}_{(\bx_1,\dots,\bx_{n-1})}\bLamb^{k_n}_{t_n}d\theta_{1}\dots d\theta_{n-1}  \\
		&= \bZeta^{i,k_n}(T-t_n,y_n)\int_{\R_+^{n-1}}\bP^{k_n,k_{n-1}}(t_n-t_{n-1},y_{n-1})\T^{n-2}_{(\bx_1,\dots,\bx_{n-2})}\ind{\theta_{n-1} \leq \lambda_{t_{n-1}}^{k_{n-1}}}d\theta_1 \dots d\theta_{n-1}  \\
		&= \bZeta^{i,k_n}(T-t_n,y_n)\bP^{k_n,k_{n-1}}(t_n-t_{n-1},y_{n-1})\int_{\R_+^{n-2}}\T^{n-2}_{(\bx_1,\dots,\bx_{n-2})}\lambda_{t_{n-1}}^{k_{n-1}}d\theta_1 \dots d\theta_{n-2}.
	\end{align*}
	Hence by induction 
	\begin{align*}
		\int_{\R_+^n} \T^n_{(\bx_{1}, \dots, \bx_{n})} \myZ{T} d\theta_1 \dots d\theta_n =\bZeta^{i,k_n}(T-t_n,y_n) \prod_{m=2}^{n} \bP^{k_m,k_{m-1}}(t_m-t_{m-1},y_{m-1})\bM^{k_1}(t_1) =:F(\bold{k},\bold{t},\bold{y})
	\end{align*}
	with $\bold{k}:=k_1,\cdots,k_n$, $\bold{t}:=t_1,\cdots,t_n$, $\bold{y}:=y_1,\cdots,y_n$, and
\begin{align*}
&A= \sum_{n=1}^{+\infty}\int_{\Delta_T^n}\sum_{k_1=1}^d \dots \sum_{k_n=1}^d\int_{\R_+} F(\bold{k},\bold{t},\bold{y}) \sum_{j=1}^n\boldsymbol{\rho}_\ell{(t_j,y_j,k_j)}dt_1 \bnu^{k_1}(dy_1)\cdots  dt_n \bnu^{k_n}(dy_n) \\
\end{align*}
	\underline{Step 2:} Integration with respect to $y$. \\
	We  separately treat the cases where $\sum \boldsymbol{\rho}_\ell \equiv 1 $ and $\sum \boldsymbol{\rho}_\ell \not\equiv 1$, and we denote the corresponding values   respectively  $A^{\not\boldsymbol{\rho}}$ and $A^{\boldsymbol{\rho}}$.\\
	$\bullet$ If $\sum \boldsymbol{\rho}_\ell \equiv 1$,
	\begin{align*}
		&\int_{\R_+^n} \left[\bZeta^{i,k_n}(T-t_n,y_n)\prod_{m=2}^{n} \bP^{k_m,k_{m-1}}(t_m-t_{m-1},y_{m-1})\bM^{k_1}(t_1)\right] \bnu^{k_1}(dy_1)\dots\bnu^{k_n}(dy_n)\\
		&=\bZetaE^{i,k_n}(T-t_n)\prod_{m=2}^{n} \bPE^{k_m,k_{m-1}}(t_m-t_{m-1})\bM^{k_1}(t_1).
	\end{align*}
	So in that case 
	$$ A^{\not\boldsymbol{\rho}} = \sum_{n=1}^{+\infty}\int_{\Delta_T^n}\sum_{k_1=1}^d \dots \sum_{k_n=1}^d \bZetaE^{i,k_n}(T-t_n)\prod_{m=2}^{n} \bPE^{k_m,k_{m-1}}(t_m-t_{m-1})\bM^{k_1}(t_1) dt_1\cdots, dt_n.$$
	$\bullet$ If $\sum \boldsymbol{\rho}_\ell \not\equiv 1,$
	\begin{align*}
		&\hspace{-1.5cm}\int_{\R_+^n} \left[\bZeta^{i,k_n}(T-t_n,y_n)\prod_{m=2}^{n} \bP^{k_m,k_{m-1}}(t_m-t_{m-1},y_{m-1})\bM^{k_1}(t_1)\right]\sum_{j=1}^n\boldsymbol{\rho_\ell}{(t_j,y_j,k_j)} \bnu^{k_1}(dy_1) \dots\bnu^{k_n}(dy_n)\\
		&=\int_{\R_+^n} \left[\bZeta^{i,k_n}(T-t_n,y_n)\boldsymbol{\rho_\ell}{(t_n,y_n,k_n)}\prod_{m=2}^{n} \bP^{k_m,k_{m-1}}(t_m-t_{m-1},y_{m-1})\bM^{k_1}(t_1)\right]\bnu^{k_1}(dy_1)\dots\bnu^{k_n}(dy_n)\\
		&+\int_{\R_+^n} \left[\bZeta^{i,k_n}(T-t_n,y_n)\prod_{m=2}^{n} \bP^{k_m,k_{m-1}}(t_m-t_{m-1},y_{m-1})\bM^{k_1}(t_1)\right]\sum_{j=1}^{n-1}\boldsymbol{\rho_\ell}{(t_j,y_j,k_j)} \bnu^{k_1}(dy_1)\dots\bnu^{k_n}(dy_n)\\
		&= \overline{\bZeta\boldsymbol{\rho}_\ell}^{i,k_n}(t_n)\prod_{m=2}^{n} \bPE^{k_m,k_{m-1}}(t_m-t_{m-1})\bM^{k_1}(t_1)\\
		&+\sum_{j=1}^{n-1}\int_{\R_+^n}\bZeta^{i,k_n}(T-t_n,y_n)\bP^{k_{j+1},k_{j}}(t_{j+1}-t_{j},y_{j})\boldsymbol{\rho_\ell}{(t_j,y_j,k_j)}\\
		&\left( \prod_{m=2,m\neq j+1}^{n} \bP^{k_m,k_{m-1}}(t_m-t_{m-1},y_{m-1})\right)\bM^{k_1}(t_1)\bnu^{k_1}(dy_1) \dots\bnu^{k_n}(dy_n)\\
		&=  \overline{\bZeta\boldsymbol{\rho}_\ell}^{i,k_n}(t_n)\prod_{m=2}^{n} \bPE^{k_m,k_{m-1}}(t_m-t_{m-1})\bM^{k_1}(t_1)\\
		&+\sum_{j=1}^{n-1}\bZetaE^{i,k_n}(T-t_n)\overline{\bP\boldsymbol{\rho}_\ell}^{k_{j+1},k_{j}}(t_{j+1}-t_{j})\left( \prod_{m=2,m\neq j+1}^{n} \bPE^{k_m,k_{m-1}}(t_m-t_{m-1})\right)\bM^{k_1}(t_1)\\
		&=:F^{\boldsymbol{\rho}}(\bold{k},\bold{t})
	\end{align*}
	where by abuse of notation 
	$$ \overline{\bP\boldsymbol{\rho}_\ell}^{k_{j+1},k_{j}}(t_{j+1}-t_{j}):=   \int \bP^{k_{j+1},k_{j}}(t_{j+1}-t_{j}, y_j) \boldsymbol{\rho}_\ell (k_j, t_j, y_j  ) \bnu^{k_j}(dy_j).$$
So in that case 
\begin{align*}
	A^{\boldsymbol{\rho}} &= \sum_{n=1}^{+\infty}\int_{\Delta_T^n}\sum_{k_1=1}^d \dots \sum_{k_n=1}^d F^{\boldsymbol{\rho}}(\bold{k},\bold{t}) dt_1 \cdots dt_n.
\end{align*}
	\underline{Step 3:} Integration with respect to $k$.\\
	This step lies essentially in the identification of matrix products.\\
	$\bullet$  If $\sum \boldsymbol{\rho}_\ell \equiv 1$,
	\begin{align*}
		&\sum_{k_1=1}^d\dots\sum_{k_n=1}^d\bZetaE^{i,k_n}(T-t_n)\prod_{m=2}^{n} \bPE^{k_m,k_{m-1}}(t_m-t_{m-1})\bM^{k_1}(t_1)\\
		&=\sum_{k_n=1}^d\bZetaE^{i,k_n}(T-t_n)\left(\prod_{m=2}^{n}\bPE(t_m - t_{m -1}) \boldsymbol{\bM}(t_1)\; \right)^{k_n,.}\\
		&= \bZetaE^{i,.}(T-t_n)\prod_{m=2}^{n}\bPE(t_m - t_{m -1})  \boldsymbol{\bM}(t_1).
	\end{align*}
	Hence
$$ A^{\not\boldsymbol{\rho}} = \sum_{n=1}^{+\infty}\int_{\Delta_T^n}\bZetaE^{i,.}(T-t_n)\prod_{m=2}^{n}\bPE(t_m - t_{m -1})  \boldsymbol{\bM}(t_1) dt_1 \cdots  dt_n.$$
	$\bullet$  If $\sum \boldsymbol{\rho}_\ell \not\equiv 1$,
	\begin{align*}
		&\sum_{k_1=1}^d\dots\sum_{k_n=1}^d\overline{\bZeta\boldsymbol{\rho}_\ell}^{i,k_n}(t_n)\prod_{m=2}^{n} \bPE^{k_m,k_{m-1}}(t_m-t_{m-1})\bM^{k_1}(t_1)\\
		&+\sum_{k_1=1}^d\dots\sum_{k_n=1}^d\sum_{j=1}^{n-1}\bZetaE^{i,k_n}(T-t_n)
		\overline{\bP\boldsymbol{\rho}_\ell}^{k_{j+1},k_{j}}(t_{j+1}-t_{j}) 
		\left( \prod_{m=2,m\neq j+1}^{n} \bPE^{k_m,k_{m-1}}(t_m-t_{m-1})\right)\bM^{k_1}(t_1)\\
		&=\overline{\bZeta\boldsymbol{\rho}_\ell}^{i,.}(t_n)\prod_{m=2}^{n} \bPE(t_m-t_{m-1})\bM(t_1)\\
		&+\sum_{j=1}^{n-1}\bZetaE^{i,.}(T-t_n)\prod_{m = j+2}^n \bPE(t_m-t_{m-1})\overline{\bP\boldsymbol{\rho}_\ell}(t_{j+1}-t_{j}) \prod_{m = 2}^j\bPE(t_m-t_{m-1})\bM(t_1)\\
		&=:G^{\boldsymbol{\rho}}(\bold{t}).
	\end{align*}
	Hence, 
	$$ A^{\boldsymbol{\rho}} = \sum_{n=1}^{+\infty}\int_{\Delta_T^n}G^{\boldsymbol{\rho}}(\bold{t}) dt_1\dots dt_n.$$
	\underline{Step 4:} Integration with respect to $t$.\\
	The step	consists in identifying the convolution products that appear in the expressions.\\
	$\bullet$ If $\sum \boldsymbol{\rho}_\ell \equiv 1$,
$$\int_{\Delta_T^n}\bZetaE^{i,.}(T-t_n)\prod_{m=2}^{n}\bPE(t_m - t_{m -1})  \boldsymbol{\bM}(t_1) dt_1 \dots dt_n=\int_0^T \int_{t_1}^T\bZetaE^{i,.}(T-t_n)\bPE_{n-1}(t_n - t_{1})\boldsymbol{\bM}(t_1)dt_n dt_1.$$
	Hence,
	$$ A^{\not\boldsymbol{\rho}} = \sum_{n=1}^{+\infty}\int_0^T \int_{w}^T\bZetaE^{i,.}(T-v)\bPE_{n-1}(v - w)\boldsymbol{\bM}(w)dv dw.$$
	$\bullet$ If $\sum \boldsymbol{\rho}_\ell \not\equiv 1$,\\
	the first term in $G^{\boldsymbol{\rho}}(\bold{t})$ integrates as above:
	$$\int_{\Delta_T^n}\overline{\bZeta\boldsymbol{\rho}_\ell}^{i,.}(t_n)\prod_{m=2}^{n} \bPE(t_m-t_{m-1})\bM(t_1)dt_1 \dots dt_n
	=\int_0^T \int_{w}^T \overline{\bZeta\boldsymbol{\rho}_\ell}^{i,.}(v) \bPE_{n-1}(v-w)\bM(t_1) dv dw$$
	where we recall that
		$$ \overline{\bZeta\boldsymbol{\rho}_\ell}^{k_{j+1},k_{j}}(v):=   \int \bZeta^{k_{j+1},k_{j}}(T-v, y_j) \boldsymbol{\rho}_\ell (k_j, v, y_j  ) \bnu^{k_j}(dy_j).$$
	For the second term
	\begin{align*}
		&\int_{\Delta_T^n}\sum_{j=1}^{n-1}\bZetaE^{i,.}(T-t_n)\prod_{m = j+2}^n \bPE(t_m-t_{m-1})\overline{\bP\boldsymbol{\rho}_\ell}(t_{j+1}-t_{j}) \prod_{m = 2}^j\bPE(t_m-t_{m-1})\bM(t_1)dt_1 \dots dt_n\\
		&=\sum_{j=1}^{n-1}\int_{0}^{T}\bZetaE^{i,.}(T-t_n)\int_{0}^{t_n}\cdots\int_{0}^{t_{j+2}}\prod_{m = j+2}^n \bPE(t_m-t_{m-1})\int_{0}^{t_{j+1}}\overline{\bP\boldsymbol{\rho}_\ell}(t_{j+1}-t_{j})\int_{0}^{t_{j}}\dots\\
		&\hspace*{1cm}\int_{0}^{t_{2}} \prod_{m = 2}^j\bPE(t_m-t_{m-1})\bM(t_1)dt_1\cdots dt_n\\
		&=\sum_{j=1}^{n-1}\int_{0}^{T}\int_{t_j}^T\bZetaE^{i,.}(T-t_n)\int_{t_j}^{t_{n}}\bPE_{n-j-1}(t_n-t_{j+1})\overline{\bP\boldsymbol{\rho}_\ell}(t_{j+1}-t_{j}) \int_{0}^{t_{j}}\bPE_{j-1}(t_j-t_{1})\bM(t_1)dt_{1}dt_{j+1}dt_ndt_{j}.	\end{align*}
	Hence, denoting \begin{align*}
	&E^{\boldsymbol{\rho}}_n :=\\
	&\int_0^T \int_{w}^T \overline{\bZeta\boldsymbol{\rho}_\ell}^{i,.}(v) \bPE_{n-1}(v-w)\bM(t_1) dv dw\\
	&+ \sum_{j=1}^{n-1}\int_{0}^{T}\int_{t_j}^T\bZetaE^{i,.}(T-t_n)\int_{t_j}^{t_{n}}\bPE_{n-j-1}(t_n-t_{j+1})\overline{\bP\boldsymbol{\rho}_\ell}(t_{j+1}-t_{j}) \int_{0}^{t_{j}}\bPE_{j-1}(t_j-t_{1})\bM(t_1)dt_{1}dt_{j+1}dt_ndt_{j}
		\end{align*}
		$$ A^{\boldsymbol{\rho}} = \sum_{n=1}^{+\infty}E^{\boldsymbol{\rho}}_n.$$	
	\underline{Step 5 :} Summing in $n$. \\
	$\bullet$  If $\sum \boldsymbol{\rho}_\ell \equiv 1$, using that $\bPE_0$ is the Dirac  distribution in 0,
	\begin{align*}
		A^{\not\boldsymbol{\rho}} &=\sum_{n=1}^{+\infty}\int_0^T \int_{w}^T\bZetaE^{i,.}(T-v)\bPE_{n-1}(v - w)\boldsymbol{\bM}(w)dv dw\\
		&=\int_0^T \int_{w}^T\bZetaE^{i,.}(T-v)\left(\bPs(v - w) + \bPE_0(v - w)\right)\boldsymbol{\bM}(w)dv dw\\
		&=\int_0^T \bZetaE^{i,.}(T-v)\left(\boldsymbol{\bM}(v)+ \int_{0}^v\bPs(v - w)\boldsymbol{\bM}(w)dw\right) dv = \E[\boldsymbol{Z}^i_T].
	\end{align*}
	$\bullet$  If $\sum \boldsymbol{\rho}_\ell \not\equiv 1$,  the first term in $E^{\boldsymbol{\rho}}_n$ sums as above
$$\sum_{n=1}^{+\infty}\int_0^T \int_w^T \overline{\bZeta\boldsymbol{\rho}_\ell}^{i,.}(v) \bPE_{n-1}(v-w)\bM(w) dv dw=	
	\E\left[(\boldsymbol{Z}^{\boldsymbol{\bZeta\rho}_\ell}_T)^i\right] $$
	and the second term sums as, denoting $\boldsymbol{Z^{\bZeta \rho_\ell}}$ a $(\bZeta \boldsymbol{\rho}_\ell,\bM,\bP)-$MSPD
{\small	\begin{align*}
	&\hspace*{-0.6cm}\sum_{n=1}^{+\infty}\sum_{j=1}^{n-1}\int_{0}^{T}\int_{t_j}^T\bZetaE^{i,.}(T-t_n)\int_{t_j}^{t_{n}}\bPE_{n-j-1}(t_n-t_{j+1})\overline{\bP\boldsymbol{\rho}_\ell}(t_{j+1}-t_{j})
		\int_{0}^{t_{j}}\bPE_{j-1}(t_j-t_{1})\bM(t_1)dt_{1}dt_{j+1}dt_ndt_{j}\\
		&\hspace*{-0.6cm}=\sum_{j=1}^{+\infty}\sum_{n=j+1}^{+\infty}\int_{0}^{T}\int_{t_j}^T\bZetaE^{i,.}(T-t_n)\int_{t_j}^{t_{n}}\bPE_{n-j-1}(t_n-t_{j+1})\overline{\bP\boldsymbol{\rho}_\ell}(t_{j+1}-t_{j})
		\int_{0}^{t_{j}}\bPE_{j-1}(t_j-t_{1})\bM(t_1)dt_{1}dt_{j+1}dt_ndt_{j}\\
		&\hspace*{-0.6cm} =\int_{0}^{T} \hspace*{-2mm}  \int_{t_j}^T \hspace*{-2mm}  \bZetaE^{i,.}(T-t_n)\int_{t_j}^{t_{n}}\hspace*{-3mm}  \left(\bPs(t_n-t_{j+1}) + \bPE_{0}(t_n-t_{j+1}) \right)\overline{\bP\boldsymbol{\rho}_\ell}(t_{j+1},t_{j})
		\int_{0}^{t_j} \hspace*{-2mm} \left(\bPs(t_j-t_{1}) + \bPE_{0}(t_j-t_{1}) \right)\bM(t_1)dt_{1}dt_{j+1}dt_ndt_{j}\\
		&\hspace*{-0.6cm} =\int_{0}^T \hspace*{-2mm} \bZetaE^{i,.}(T-t_n)\int_{0}^{t_{n}}\hspace*{-2mm} \left(\bPs(t_n-t_{j+1}) + \bPE_{0}(t_n-t_{j+1}) \right)
		\int_{0}^{t_{j+1}}\hspace*{-4mm} \overline{\bP\boldsymbol{\rho}_\ell}(t_{j+1}-t_{j})\left(\int_{0}^{t_j}\bPs(t_j-t_{1})\bM(t_1)dt_1 + \bM(t_j) \right)dt_{j}dt_{j+1}dt_n\\
		&\hspace*{-0.6cm} =\int_{0}^T\bZetaE^{i,.}(T-t_n)\int_{0}^{t_{n}}\hspace*{-2mm}\left(\bPs(t_n-t_{j+1}) + \bPE_{0}(t_n-t_{j+1}) \right)\E[\boldsymbol{Z_{t_{j+1}}^{\bP\rho_\ell}}]dt_{j+1}dt_n\\
		&\hspace*{-0.6cm}=\int_{0}^T\bZetaE^{i,.}(T-v)\left(\E[\boldsymbol{Z_{v}^{\bP\rho_\ell}}] + \int_{0}^{v}\bPs(v-w)\E[\boldsymbol{Z_{w}^{\bP\rho_\ell}}]dw\right)dv
	\end{align*}
	}
which concludes the proof.	
\end{proof}

\paragraph{Acknowledgment} This work is part of the project CyFi "Cyber Financialization" with the financial support of Bpifrance. Caroline Hillairet's research benefited from the support of the Chair Stress Test, RISK Management and Financial Steering, led by the French Ecole Polytechnique and its Foundation and sponsored by BNP Paribas.

\bibliographystyle{plain}
\bibliography{biblioHR}
\end{document}